\title[The $k$-Plancherel measure and a Finite Markov Chain]{The $k$-Plancherel Measure and a Finite Markov Chain}
\author{Svante Linusson, Alperen \"{O}zdemir} 
\address{Department of Mathematics, KTH Royal Institute of Technology, Stockholm, Sweden} 
\email{linusson@kth.se, alpereno@kth.se} 
\keywords{core partitions, Markov chains, Plancherel measure, symmetric functions, TASEP}
\thanks{We would like to thank Jennifer Morse for helpful discussions, in particular help with the proof of Lemma \ref{conj}. Both authors were funded by the Swedish Research Council, VR, grant 2022-03875, and the second author is also funded by the Knut and Alice Wallenberg Foundation.}
\date{2025-12-30}
\newtheorem{theorem}{Theorem}[section]
\newtheorem{lemma}[theorem]{Lemma}
\newtheorem{corollary}[theorem]{Corollary}
\newtheorem{definition}[theorem]{Definition}
\newtheorem{example}[theorem]{Example}
\newtheorem{conjecture}{Conjecture}[]
\newcommand{\vac} {\ensuremath{\cdot}}
\newcommand{\occ} {\ensuremath{\bigcirc}}
\newcommand{\bbox}{\hfill $\Box$}
\newcommand{\pf}{\noindent {\it Proof:} }
\newcommand{\cal}[1]{\mathcal{#1}}
\newcommand{\tn}[1]{\textnormal{#1}}
 \pgfplotsset{compat=1.18}
\begin{document}

\begin{abstract}
Let $\mathcal{P}_k(n)$ denote the set of partitions of $n$ whose largest part is bounded by $k,$ which are in 
well-known bijection with $(k+1)$-cores $\mathcal{C}_k$. We study a growth process on $\mathcal{C}_k$, whose stationary distribution is the $k$-Plancherel measure, which is a natural extension of the Plancherel measure in the context of $k$-Schur functions. When $k\to\infty$ it converges to the Plancherel measure for partitions, a limit studied first by Vershik-Kerov. 
However, when $k$ is fixed and $n\to \infty$, we conjecture that it converges to a shape close to the limit shape from the uniform growth of partitions, as studied by Rost. 
We show that the limiting behavior, for fixed $k$, is governed by a finite Markov chain with $k!$ states over a subset of the $k$-bounded partitions or equivalently as a TASEP over cyclic permutations of length $k+1$. 
This paper initiates the study of these processes, state some theorems and several intriguing conjectures found by computations of the finite Markov chain.

\end{abstract}

\maketitle

\section{Introduction}

We study an infinite growth process on $(k+1)$-cores, which are integer partitions with no hook of length $k+1$. They are in a well known bijection with $k$-bounded partitions. The two most classical growth models on partitions have limit shapes determined in famous research by Vershik-Kerov \cite{VK77} and Rost \cite{R81}. In the former the partitions were generated with distribution given by the Plancherel measure and in the latter the growth process was locally uniform in each step. They give rise to two different limit shapes of the diagram of the partition. Since then, these models have been the focus of a large number of fascinating and influential research, such as Johansson \cite{J00}. 

In \cite{AL14,L15} the modulo $k$ version of the Rost model was studied in connection to a problem on reduced random walks in the hyperplane arrangement of affine Coxeter groups. The limit shape $D_k$ for a fixed $k$ was shown to be a piecewise linear curve with $k-1$ straight lines of deterministic slope. The  shape $D_k$ closely followed the limit shape of Rost, and in fact, as $k\to \infty$ we come arbitrarily close to that limit shape.

The study of a similar growth modulo $k+1$ process defined so that the stationary distribution of the $(k+1)$-cores in limit will be a version of the Plancherel measure, which we call the $k$-Plancherel measure. Its stationary distribution converges to the Plancherel measure as $k\rightarrow \infty.$ However, as $n\rightarrow \infty,$ we show that it is governed by a finite Markov chain in the symmetric group $S_k.$ Our study is heavily based on the theory of $k$-Schur functions and weak and strong orders of the affine symmetric group; see survey \cite{LLMSSZ14}, as explained in Section \ref{sec:k-Planch}.  It appears that we get the same limit shape $D_k$ for a fixed $k$, Conjecture \ref{conj:limitshape}. That is, close to the shape of Rost instead of Vershik-Kerov.

Thanks to the so called $k$-rectangle property we can restrain the infinite process to a finite process on $k!$ states. They can be described most easily as $k$-bounded partitions that have at most $k-i$ parts of size $i$ for each $1\le i\le k$.  Just as in \cite{AL14,L15} we have a natural map from the process on the $(k+1)$-cores to a cyclic TASEP (Totally asymmetric exclusion process) on a ring with a permutation of the numbers $1,\dots,k+1$. This leads to a very interesting TASEP, with transition rates given by ratios of the number of strong marked tableaux of the shapes by the corresponding $(k+1)$-cores.  As explained in Section \ref{sec:TASEP}. See Figure \ref{fig:k=3} for the finite chain when $k=3$.

In this exposition, we have more conjectures than theorems about these processes. The stationary distribution of this finite process (TASEP or on $k$-bounded partitions) has several interesting properties, some conjectural. We may use what is called the $k$-conjugation in the theory of $k$-Schur functions. Lemma \ref{conj} tells that both the infinite and finite chains are symmetric with respect to the $k$-conjugation, which implies, for example, that the limit shape of the infinite process is symmetric.
For the finite $k$-chain we conjecture that there is also another involution which we call $k$-complementation (reverses the permutation in TASEP). It has no fix points and matches states that seem to have the same stationary probability, but for no apparent reason, see Conjecture \ref{conj:complement}. 
Another intriguing pattern is that the stationary distribution for the finite $k$-chain has all denominators which are multiples of small primes, and can all be written as, $\frac{A_\lambda}{\prod_{j=1}^k\binom{2j}{j}}$ for some integer $A_\lambda$, see Conjectures \ref{conj:lcd} and \ref{conj:minimum}. We have yet no conjecture of what $A_\lambda$ counts that can imitate the beautiful theory of multi-line queues by Ferrari and Martin \cite{FM07} for cyclic TASEP with uniform jump rates.

We list the theorems and conjectures we have in Sections \ref{sec:TASEP} and \ref{sect:lim}. The proofs are mostly based on so called raising operators from the theory of $k$-Schur functions and we prove some technical properties we need in an Appendix.

\section{Background}

\subsection{$k$-Schur functions}
The $k$-Schur functions are introduced in the context of proving positivity of Macdonald symmetric polynomials in \cite{LLM03}, 
and later found many other applications in geometry and representation theory. We will explore them from a probabilistic perspective.
See \cite{LLMSSZ14} for a comprehensive and very detailed survey of $k$-Schur functions. Besides all the interesting connections, regarding our particular interest, we will broach the topic through the usual Schur functions. 
We start with defining the homogenous symmetric functions for a set of possibly infinite variables 
$X=\{x_1,x_2, \ldots\}$ as
\[h_m[X]=\sum_{1 \leq i_1 \leq \ldots \leq i_m} x_{i_1}\ldots x_{i_m}.\]
Through which first we can define the ring of symmetric functions $\mathbb{Q}[h_1,h_2,\ldots]$.
We can extend the definition of $h_m$ to any partition $\lambda=(\lambda_1,\ldots \lambda_{l(\lambda)})$ of $\lambda_1+\ldots \lambda_{l(\lambda)}$ by letting  
$h_{\lambda}[X]=h_{\lambda_1}[X]\cdots h_{\lambda_{l(\lambda)}}[X].$ 
 Then one among many other means to define the Schur functions, is to let 
 \[s_{\lambda}[X]=\tn{det}\left(h_{\lambda_{i}+i-j} \right)_{1\leq i,j \leq n}\]
where $n \geq l(\lambda).$ We will express the determinant through the raising operators, which are defined as
\begin{equation} \label{raising}
R_{ij}(\lambda)=(\lambda_1,\ldots,\lambda_i+1,\ldots,\lambda_j-1,\ldots,\lambda_{l(\lambda)})    
\end{equation}
and we let
\begin{equation} \label{opext}
    R_{ij}h_{\lambda}=h_{R_{ij}(\lambda)}.
\end{equation}
So an alternative expression will be 
\[s_{\lambda}[X]=\prod_{i <j} (1-R_{ij}) h_{\lambda}.\]
 See Section I.4. of \cite{M98}. Note that the Schur functions form a basis for $\mathbb{Q}[h_1,h_2,\ldots].$ 
 
 The $k$-Schur functions are defined over \textit{$k$-bounded partitions} instead. A partition $\lambda$ is called $k$-bounded if $\lambda_1 \leq k.$ We let 
 \begin{equation} \label{deter}
     s_{\lambda}^{(k)}= \prod_{i=1}^{l(\lambda)} \prod_{j=i+1}^{k-\lambda_i+i} (1-R_{ij}) h_{\lambda}.
 \end{equation}
 They form a basis of 
\[\mathbb{Q}[h_1,\ldots,h_k],\]
which is a subring of the ring of symmetric functions. See Section $3$ of the aforementioned monograph \cite{LLMSSZ14} for a more direct, parametric definition of $k$-Schur functions. For our purposes, we will also give its definition by tableaux as well.

\subsection{Core partitions} \label{sec:core}

 We say $\kappa$ is an \textit{$r$-core partition} if there is no cell with hook-length equal to $r$ in its Young diagram. Let us denote the set of $r$-core partitions by $\mathcal{C}_{r},$ and the set of all $k$-bounded partitions by $\mathcal{P}_k$. In addition, we use  $\mathcal{P}_k(n)$ for the set of all $k$-bounded partitions of $n.$ There is a bijection between $\mathcal{C}_{k+1}$ and $\mathcal{P}_k,$ as explained in \cite{LM05}. For $\kappa \in \mathcal{C}_{k+1},$ it can be defined as
\[ \mathfrak{p}: \, \kappa \rightarrow (\lambda_1,\ldots,\lambda_l) \in \mathcal{P}_k\]
where $\lambda_i$ is the number of cells with hook-length smaller than 
to $k+1$ in the $i$th row 
of $\kappa.$ This corresponds to removing all boxes with hook-length greater than $k+1,$ and then sliding all cells to the left of the diagram. 

The inverse of $\mathfrak{p}$, let us call it $\mathfrak{c}$, can be described by a simple sequential process. We start from the top row, $\lambda_{l(\lambda)}$, and move down to the bottom row sequentially. We mark the hook-lengths in the row in the current stage, say $\lambda_i \tn{ for } i\leq l$. If we have a box with hook-length greater than or equal to $k+1$ in $\lambda_i$ we add boxes to the left of the diagram to all rows $\lambda_1,\ldots,\lambda_i$ until all cells already-present in $\lambda_i$ have hook-lengths less than or equal to $k.$ 

\begin{figure}
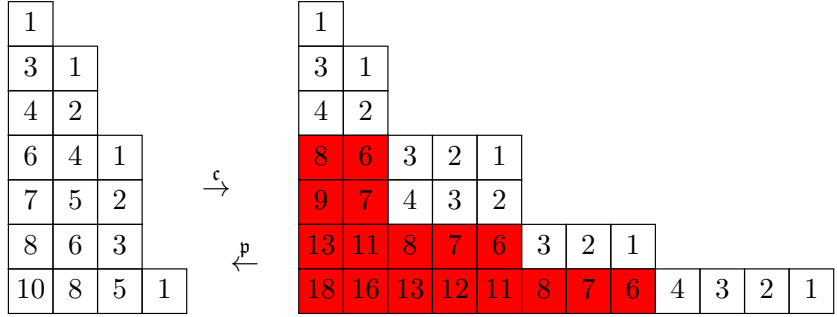

\centering
\vspace*{0.5cm}

\begin{ytableau}
1 \\ 3 & 1 \\ 4 & 2 \\  6 & 4  & 1 \\  7 & 5 & 2\\  8 & 6 & 3\\  10 & 8 & 5 & 1  
\end{ytableau} 
$ \, \raisebox{-2cm}{$\overset{\mathfrak{c}}{\to}$}
\raisebox{-3cm}{$\overset{\mathfrak{p}}{\leftarrow}$} \quad $
\begin{ytableau}
1 \\ 3 & 1 \\ 4 & 2 \\ *(red) 8 & *(red) 6 & 3 & 2  & 1 \\ *(red)  9 & *(red) 7 & 4 & 3 & 2\\ *(red)  {13} & *(red)  {11} &*(red)  8 & *(red) 7 &  *(red) 6 & 3 & 2 & 1\\ *(red) {18} & *(red) {16} & *(red) {13} & *(red) {12} & *(red) {11} & *(red) 8 & *(red) 7 & *(red) 6 & 4 & 3 & 2 & 1  
\end{ytableau}
\caption{The Young diagram of a $4$-bounded partition and the map to a $5$-core partition $\kappa$ with hook lengths. The red boxes have hook length greater than 5 and are removed by $\mathfrak{p}$ and added by $\mathfrak{c}$.}
\end{figure}

\subsection{Weak and strong orders}

We are interested in two different orders that come essentially from the affine symmetric group $\tilde{S}_n,$ the weak order and the strong (Bruhat) order. We refer the reader to \cite{W99} for their algebraic and lattice theoretical aspects in $\tilde{S}_n$, and give the definitions in our context, which are based on \cite{L10} and \cite{LLMSSZ14}.

We use the French notation for Young diagrams as in the figures above, which is more convenient for bounded partitions. Recall that a \textit{content} of a cell $c=(i,j)$ in the $i$th row and the $j$th column of a Young diagram is $j-i.$ For the core partitions, we define

\begin{definition} \tn{(Weak order)} \label{weak order}
Let $\tau$ and $\kappa$ be $(k+1)$-cores. We say $\kappa$ precedes $\tau$ in the weak order, denoted by $\kappa \rightarrow_k \tau$ if and only if all cells in $\tau \setminus \kappa$ have the same content modulo $(k+1)$. 
\end{definition}

To give a similar definition for bounded partitions and later to compare it with the strong order, we first define a conjugation operation on them, which can be found in \cite{LM05}. We define it through $k$-cores, which is symmetric under the operation, unlike in the case with bounded partitions. Recall that for a given partition $\lambda,$  the \textit{conjugate} of $\lambda,$ denoted by $\lambda',$ is the partition whose tableau is the diagonal reflection of the tableau of $\lambda.$ 
\begin{definition} \label{conjugate}
Let $\lambda$ be a $k$-bounded partition. The \textit{$k$-conjugate} of $\lambda$ is defined as $\lambda^{\omega_k}=\mathfrak{p}(\mathfrak{c}(\lambda)').$ 
\end{definition}
Then we have
\begin{definition} \tn{(Weak cover for bounded partitions)}
Let $\lambda$ and $\mu$ be $k$-bounded partitions. We say $\mu$ covers $\lambda$ in weak order, denoted by $\lambda \rightarrow_k \mu$ if and only 
\[|\mu|=|\lambda|+1, \lambda \subseteq \mu \tn{ and }\lambda^{\omega_k} \subseteq \mu^{\omega_k}.\]
\end{definition}
Observe that in the usual Young lattice, we already have
\[\lambda \subseteq \mu \tn{ if and only if } \lambda' \subseteq \mu'.\]

Then we define the strong order on cores through bounded partitions.
\begin{definition} \tn{(Strong cover for cores)}
Let $\kappa$ and $\tau$ be $(k+1)$-cores. We say $\kappa$ covers $\tau$ in strong order, denoted by $\tau \Rightarrow_k \kappa$ if and only if 
\[|\mathfrak{p}(\kappa)|=|\mathfrak{p}(\tau)|+1 \tn{ and } \tau \subseteq \kappa.\]
\end{definition}

We note that the lattice formed by the weak order is a subposet of the strong order, that is, every cover relation in the weak order is also present in the strong order.
In the case of weak order, we have an edge between two partitions only if one is obtained from the other by box addition, and there are at most $k$ covers for each $(k+1)$-core on the lattice. See \cite{LLMSSZ14} for a comparison of two posets with the Young lattice.
 
\subsection{k-tableaux}

The Schur functions can be defined more combinatorially through semi-standard Young tableaux as 
 \begin{equation}\label{ssyt}
     s_{\lambda}[X]=\sum_{T \tn{ is }SSYT(\lambda)} x^T.
 \end{equation}
See Section 7.10 of \cite{S99} for more details. For our particular interest, we note that if we take the coefficient $x_1 x_2 \cdots x_n$ of $s_{\lambda}[X]$ where $\lambda \vdash n,$ we have instead a count of the standard tableaux, which is known as the \textit{dimension} of $\lambda$: 
 \begin{equation}\label{syt}
    d_{\lambda} = [x_1 x_2 \ldots x_n]s_{\lambda}[X]=\sum_{T \tn{ is }SYT(\lambda)} x^T.
 \end{equation}
It can also be calculated from the diagram using the hook-length formula as follows:
\[d_{\lambda}=\frac{n!}{\prod_{(i,j) \in \lambda}h(i,j)}\]
where the product runs over all cells of the diagram of $\lambda$ and $h(i,j)$ is the hook-length of the $(i,j)$th cell.

 Now we turn to $k$-analogs of semi-standard Young tableaux. The weak \tn{$k$-tableaux}  for core partitions is introduced in \cite{LM05}, then accompanied by strong tableaux in \cite{L10}.We will cover only the latter for our purposes and refer to Section 9 of \cite{L10} for a complete treatment of the topic with examples.

We recall the definition of a skew diagram. Given that $\tau \subset \kappa,$ let $\kappa \setminus \tau$ denote the skew diagram obtained by removing the cells of $\tau$ from $\kappa.$

\begin{definition}
We call a triplet $(\kappa, \tau, c)$ a strong marked cover and $c$ a marking if and only if $\kappa \Rightarrow_k \tau$ and $c$ is the content of the cell located at the southeast most of a connected component of $\kappa \setminus \tau.$ 
\end{definition}

\begin{definition} \label{def:tab}
A strong marked tableau of shape $\lambda \vdash n$ and weight $\beta=(\beta_1,\beta_2,\ldots,\beta_d)$ such that $\sum_{i=1}^d\beta_i=n$ is a sequence of $(k+1)$ cores
\begin{equation} \label{strchain}
\emptyset = \kappa^{(0)} \Rightarrow_k \kappa^{(1)} \Rightarrow_k \kappa^{(2)} \Rightarrow_k \cdots \Rightarrow_k \kappa^{(n)}=\mathfrak{c}(\lambda), 
\end{equation}
and an associated sequence of markings $\{c_1,\ldots,c_n\}$ such that
\begin{equation} \label{cond:cont}
   c_{A_i+1}  < c_{A_i+2} < \cdots <c_{A_i+ \beta_{i+1}}  
\end{equation}
where $A_0=0$ and $A_i=\sum_{j=1}^i \beta_i$ for $i=0,1,\ldots,d-1.$ 
\end{definition}
 Now we can express $k$-Schur functions analogous to \eqref{ssyt} as 
\[s_{\lambda}^{(k)}[X]=\sum_{T \tn{ is Strong}(\mathfrak{c}(\lambda))} x^T\]
where $\mathfrak{c}:\mathcal{P}_k \rightarrow \mathcal{C}_{k+1}$ is the bijection defined in Section \ref{sec:core} and $\textnormal{Strong}(\kappa)$ is the strong marked tableaux for the core partition $\kappa$ for a given set of variables. Let $\mathbf{K}_{\lambda \mu}^{(k)}$ be the number of strong marked tableaux of shape $\lambda$ and weight $\mu,$ which is an analog of Kostka numbers, the coefficients in the monomial expansion of Schur functions. For $\lambda \in \mathcal{P}_k(n),$ we have
\begin{equation} \label{strongkostka}
s_{\lambda}^{(k)}=\sum_{\mu }\mathbf{K}_{\lambda \mu}^{(k)} m_{\mu}
\end{equation}
similar to Schur functions. See Section 2.4 of \cite{LLMSSZ14}. We can also define a $k$-analog of the dimension in \eqref{syt} as
\begin{equation}\label{strongdim}
  d_{\lambda}^{(k)}:= [x_1 x_2 \cdots x_n] s_{\lambda}^{(k)}(x). 
\end{equation}
 Similarly, we define
 \begin{equation}\label{weakdim}
     F_{\lambda}^{(k)}[X]:=\sum_{T \tn{ is Weak}(\mathfrak{c}(\lambda))} x^T \quad \tn{ and } \quad w_{\lambda}^{(k)}:= [x_1 x_2 \cdots x_n] F_{\lambda}^{(k)}(x)
 \end{equation}
where $\textnormal{Weak}(\kappa)$ is the weak tableau for the core partition $\kappa$ for a given set of variables. We have the expansion:
\[F_{\lambda}^{(k)}=\sum_{\mu \in \cal{P}_k} K_{\lambda \mu}^{(k)} m_{\mu} \]
where the coefficients $K_{\lambda \mu}^{(k)}$ are known as \textit{weak Kostka numbers}. $K_{\lambda \mu}^{(k)}$ counts the number of sequences of bounded partitions, starting from $\emptyset$ and ending at $\lambda,$ with conditions imposed by $\mu$ on skew diagrams between successive elements of the sequence. We refer to Section 2.1-3 of \cite{LLMSSZ14} for details. If we particularly look at $w_{\lambda}^{(k)}=K_{\lambda,(1^n)}$, we observe that it counts the number of sequences precisely in the form
\[\emptyset = \lambda_1 \rightarrow \lambda_2 \rightarrow \cdots \rightarrow \lambda_{n-1} \rightarrow \lambda_n =\lambda,\]
which is the number of paths leading to $\lambda$ in the lattice of the weak order on $k$-bounded partitions. Either from this fact or, using the well-known bijection between affine Grassmannian elements and bounded partitions (See Sect. 1.2 of \cite{LLMSSZ14}), from the recursion for the number of reduced words for affine Grassmannian elements founded in Sect. 3.1 of \cite{BB05}, we have
\begin{equation}\label{w_kostka}
    w_{\lambda}^{(k)}= \sum_{\mu \to_k \lambda} w_{\mu}^{(k)}. 
\end{equation}

Finally, we note the following inequality 
\[w_{\lambda}^{(k)} \leq d_{\lambda} \leq d_{\lambda}^{(k)}\]
for all $k$-bounded partition $\lambda,$ which follows, for instance, from Theorem 5.5 of \cite{BB96}. We have 
$w_{\lambda}^{(k)} = d_{\lambda}^{(k)}=d_{\lambda}$
provided that $k\geq n.$

\section{A growth process on bounded partitions and a finite Markov chain}

\subsection{Cauchy identity and the $k$-Plancherel measure}\label{sec:k-Planch}

Now we will define a probability measure over bounded partitions analogous to Plancherel measure over partitions, equivalently over core partitions by the bijection above, which will involve both weak and strong orders. First, we invoke the well-known Cauchy identity for Schur functions:

\begin{equation}\label{cauchy}
    \prod_{i,j}\frac{1}{1-x_iy_j}=\sum_{\lambda}s_{\lambda}(x)s_{\lambda}(y)
\end{equation}
where $x=(x_1,x_2,\ldots)$ and $y=(y_1,y_2,\ldots)$ are possibly infinite sequences of variables. Following the Robinson-Schensted-Knuth algorithm or the fact \eqref{ssyt} noted above, we can give a tableau formulation to the identity as follows:
\[\prod_{i,j}\frac{1}{1-x_iy_j}=\sum_{(P,Q) \textnormal{ are SSYT}}x^P y^ Q = \sum_{\lambda} \left(\sum_{\textnormal{sh }P=\lambda} x^P\right) \left(\sum_{\textnormal{sh }Q=\lambda} y^Q\right)\]
where the sum runs over pairs of semistandard Young tableaux (P,Q) and $\textnormal{sh} \, P$ stands for the shape of the tableau $P.$ See Section 4.8 of \cite{S01} or Section 7.11 of \cite{S99} for a complete exposition of the subject.

What we are mainly interested in are the quantities obtained by taking the coefficient of $x_1 x_2 \cdots x_n y_1 y_2 \cdots y_n$ in \eqref{cauchy}. Using \eqref{syt} and noting that
\[[x_1 x_2 \cdots x_n y_1 y_2 \cdots y_n] \prod_{i,j}\frac{1}{1-x_iy_j}=n!,\]
we have 
\[\sum_{\lambda \vdash n}d_{\lambda}^2=n!.\]
Taking $d_{\lambda}$ as the dimension of the irreducible representation associated with $\lambda,$ the identity above restates the general fact that ``the sum of squared dimensions of irreducible representations of a group is equal to its size".

This immediately gives a measure on partitions of $n,$ by taking 
\begin{equation}\label{plancherel}
 \mathbf{P}(\lambda)=\frac{d_{\lambda}^2}{n!},   
\end{equation}
which is the celebrated Plancherel measure. This measure can be obtained sequentially as a growth process over Young diagrams. Suppose $\Lambda \vdash n+1$ can be obtained from $\lambda \vdash n$ by adding a box to its diagram. Then letting
\[P(\lambda,\Lambda)=\frac{d_{\Lambda}}{(n+1) \cdot d_{\lambda}},\]
we obtain a well-defined process on the set of all Young diagrams and its stationary distribution is the Plancherel measure. This can be shown from the relations
\begin{equation}\label{growth}
d_{\Lambda} = \sum_{\lambda: \lambda \nearrow \Lambda} d_{\lambda} \quad \textnormal{ and } \quad d_{\lambda} = \frac{1}{|\Lambda|} \sum_{\Lambda: \lambda \nearrow \Lambda} d_{\Lambda}.
\end{equation}
See, for instance, \cite{K94} and \cite{K96}.

Along the same lines, we can define an analogous measure over the bounded partitions. In Chapter 4 of \cite{L10}, the authors generalized RSK algorithm to bounded partitions and derive the analogous Cauchy identity, from which we get
\[n![x_1 x_2 \cdots x_n y_1 y_2 \cdots y_n]\sum_{\lambda \in \mathcal{P}_k}s_{\lambda}^{(k)}(x)s_{\lambda}^{(k)}(y)= \sum_{\lambda \in \mathcal{P}_k(n)} w_{\lambda}^{(k)} d_{\lambda}^{(k)},\]
see \eqref{strongdim} and \eqref{weakdim}. Then we define the \textit{$k$-Plancherel measure} for any $\lambda \in \mathcal{P}_k(n)$ as
\begin{equation}\label{k-plancherel}
 \mathbf{P}_k(\lambda)=\frac{w_{\lambda}^{(k)}d_{\lambda}^{(k)}}{n!}.  
\end{equation}

\subsection{Pieri rule and the growth process} \label{sec:pieri}

An important property which $k$-Schur functions inherit from the Schur functions is the Pieri rule:
\[h_1 s_{\lambda}^{(k)} = \sum_{\lambda \to_k \Lambda} s_{\Lambda}^{(k)}.\]
See Section 2.2 of \cite{LLMSSZ14}. Note that $\Lambda$ is obtained from $\lambda$ in weak order. Suppose $|\lambda|=n.$ We have
\begin{align*}
[x_1\ldots x_{n+1}] \, h_1 s_{\lambda}^{(k)}(x_1,\ldots,x_{n+1}) &= \sum_{i=1}^{n+1} [x_1 x_2 \cdots x_{i-1}x_{i+1}\cdots x_n x_{n+1}] s_{\lambda}^{(k)} = (n+1) d_{\lambda}^{(k)}.
\end{align*}
Therefore, we can define transition probabilities
\begin{equation}\label{inftrans}
    P(\lambda,\Lambda)= \frac{d_{\Lambda}^{(k)}}{(n+1)d_{\lambda}^{(k)}}.
\end{equation}
Its stationary distribution is given in \eqref{k-plancherel}, which can be easily verified using \eqref{w_kostka} as follows:
\[ \mathbf{P}_k(\lambda)=\sum_{\mu \to_k \lambda}\pi(\mu) P(\mu,\lambda)= 
\frac{w_{\mu}^{(k)}d_{\mu}^{(k)}}{(n-1)!} \frac{d_{\lambda}^{(k)}}{n \cdot d_{\mu}^{(k)}} = 
\frac{d_{\lambda}^{(k)}}{n!}  \sum_{\mu \to_k \lambda} w_{\mu}^{(k)} = \frac{w_{\lambda}^{(k)}d_{\lambda}^{(k)}}{n!}.
\]

\subsection{$k$-rectangle property and a finite Markov chain}\label{sec:rectangle}

Let $\square_i$ denote the partition $(i,i,\ldots,i)$ with $k-i+1$ parts for $i=1,\ldots,k.$ We have the following property of $k$-Schur functions: \\
\noindent \textbf{$k$-rectangle property} (Theorem 40 of \cite{LM07}): For any partition $\lambda \in \mathcal{P}_k$ and $i \in [k],$
\[s_{\square_i} s^{(k)}_{\lambda} = s^{(k)}_{\lambda \cup \square_i}. \]
See Corollary 8.3 of \cite{L08} for the equivalent property for affine Grassmanian permutations.

Recall that $\mathcal{P}_k$ denotes the set of $k$-bounded partitions. By the rectangle property, we can project the Markov chain to partitions such that the number of its parts with length $i$ is less than $k-i+1.$ Letting $\lambda=(\lambda_1,\lambda_2,\ldots)$ and $l_i(\lambda)$ denote the number of parts with length $i$, we define 
\[\mathcal{R}_k:=\{\lambda \in \mathcal{P}_k \, : \, l_i(\lambda)\leq k-i \tn{ for all } i=1,\ldots,k\}. \]
Since $|\mathcal{R}_k|=k!$, we obtain a Markov chain on $k!$ partitions, and the transition probability for any $\lambda,\mu \in \mathcal{R}_k$ is given by 
\[P(\lambda,\mu)= \frac{1}{ |\lambda|+1} \times \begin{cases}
  d_{\mu}^{(k)} \slash d_{\lambda}^{(k)} & \tn{ if  }\lambda \rightarrow_k \mu, \\
    d_{\Lambda}^{(k)} \slash d_{\lambda}^{(k)} & \tn{ if  }\lambda \rightarrow_k \Lambda = \mu \cup \square_i \tn{ for some }i \in [k],\\
    0  & \tn{ otherwise}.
\end{cases}\]
Let us call the stationary distribution of the Markov chain $\pi$. 

In general, it is difficult to find explicit expressions for $d_\lambda^{(k)}$ and for the transition rates. Nevertheless, one can use the raising operators, see Appendix \ref{appendix}, to identify them. Here we give one example and then one theorem.
\begin{example} \label{ex:dim}
 Let $k=3$ and $\lambda=(2,1,1)$. From \eqref{deter} and \eqref{strongdim} we know that $d_{(2,1,1)}^{(3)}$ is the 
 coefficient of $x_1x_2x_3x_4$ in $(1-R_{12})(1-R_{23})h_{(2,1,1)}=h_{(2,1,1)}-h_{(2,2,0)}-h_{(3,0,1)}+h_{(3,1,0)}$. First, note that the last two will cancel. We will use this sort of cancellation several times in later proofs. Since $h_{(2,1,1)}=h_2h_1h_1$, the coefficient of $x_1x_2x_3x_4$ is the number of ways to choose which variables should come from $h_2, h_1$ and $h_1$ respectively, that is $\binom{4}{2,1,1}$. Similarly, the coefficient in 
 $h_{(2,2)}$ is $\binom{4}{2,2}$ and hence $d_{(2,1,1)}^{(3)}=12-6=6$.
\end{example}

\begin{theorem} \label{Thm:rate1} Let $\lambda \in \mathcal{R}_k$ with $l_1(\lambda)=k-1$ and $\mu=\lambda \backslash (1^{k-1}).$ Then, we have $P(\lambda,\mu)=1/k.$ That is to say, adding a box in the first column of a partition with $(k-1)$ parts with size $1$, so that we remove $k$ 1's and $l_1$ becomes 0, is always $1/k$.
\end{theorem}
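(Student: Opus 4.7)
The plan is to rewrite $P(\lambda,\mu)$ as a ratio of $k$-Schur dimensions and evaluate it using both the $k$-rectangle property and a new "sub-rectangle" identity. Since $\lambda\in\mathcal{R}_k$ has $l_1(\lambda)=k-1$, the only way to reach $\mu$ in a single finite-chain step is through the infinite-chain move $\lambda\to\Lambda:=\lambda\cup(1)$ (which has $l_1(\Lambda)=k$), followed by the rectangle projection $\Lambda\mapsto\mu=\Lambda\setminus\square_1$. Thus
$$P(\lambda,\mu)=\frac{d_\Lambda^{(k)}}{(|\lambda|+1)\,d_\lambda^{(k)}},$$
and writing $n:=|\mu|$ (so $|\lambda|=n+k-1$ and $|\Lambda|=n+k$), the theorem reduces to showing $d_\Lambda^{(k)}/d_\lambda^{(k)}=(n+k)/k$. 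The rectangle property $s_{\square_1}\,s_\mu^{(k)}=s_\Lambda^{(k)}$, together with $s_{\square_1}=s_{(1^k)}$ having coefficient $1$ on every square-free monomial of degree $k$, yields upon extracting $[x_1\cdots x_{n+k}]$ and using the symmetry of $s_\mu^{(k)}$ the dimension formula $d_\Lambda^{(k)}=\binom{n+k}{k}\,d_\mu^{(k)}$.

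The main lemma I would prove is the one-step-below analog of the rectangle property:
$$s_\lambda^{(k)}=s_{(1^{k-1})}\,s_\mu^{(k)}.$$
Granting this, the same extraction gives $d_\lambda^{(k)}=\binom{n+k-1}{k-1}\,d_\mu^{(k)}$, and dividing the two dimension formulas produces the desired ratio $(n+k)/k$, completing the proof.

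For the lemma I would use the raising-operator definition \eqref{deter}, writing $\lambda=(\mu_1,\ldots,\mu_l,1^{k-1})$ with $l=l(\mu)$, so that $h_\lambda=h_\mu\cdot h_1^{k-1}$. The factors $(1-R_{ij})$ appearing in the product for $s_\lambda^{(k)}$ split into four groups by the location of $(i,j)$: both indices in the $\mu$-block ($i,j\le l$); both in the $1$-block ($l<i,j\le l+k-1$); crossing between the two blocks ($i\le l<j\le l+k-1$); or reaching into a zero row ($j>l+k-1$). Factors of the last type force a negative part in $R_S(\lambda)$, since no factor in the product raises a zero row, so they can be dropped from the signed expansion. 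Grouping the remaining signed sum by the crossing subset $S_{AB}$ and using that $h_\lambda$ factors across blocks, the inner sum over $\mu$-block and $1$-block subsets factorizes, with the $1$-block piece equal to the regular Schur function $s_\alpha$, where $\alpha\in\{0,1\}^{k-1}$ records the row values in the $1$-block after the AB-decreases.

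The crux is that $\mu_i\ge 2$ for $i\le l$ (since $l_1(\mu)=0$), which combined with the crossing bound $j\le k-\mu_i+i$ forces $j\le l+k-2$: no crossing factor can decrease the last row $l+k-1$ of the $1$-block. Hence $\alpha_{k-1}=1$ always, and whenever $S_{AB}\ne\emptyset$ some earlier $\alpha_p=0$; choosing $p$ maximal gives the adjacent pattern $(\alpha_p,\alpha_{p+1})=(0,1)$, so $\alpha_p-p=\alpha_{p+1}-(p+1)$ makes the $p$-th and $(p+1)$-th rows of the Jacobi--Trudi matrix for $s_\alpha$ coincide, giving $s_\alpha=0$. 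All $S_{AB}\ne\emptyset$ contributions therefore vanish, and the surviving $S_{AB}=\emptyset$ terms reassemble cleanly as $s_\mu^{(k)}\cdot s_{(1^{k-1})}$, proving the lemma. The main technical step is this straightening argument; the rest is routine bookkeeping with the four operator types.
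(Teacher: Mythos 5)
Your overall strategy is genuinely different from the paper's and, at the top level, sound: the paper computes $d_\lambda^{(k)}$ and $d_\Lambda^{(k)}$ separately as explicit multinomials (via the triangle-operator expansion over compositions of $k-1$ and the alternating-sum identity of Lemma \ref{lem:comp}, supported by the cancellation Lemmas \ref{long_col1}--\ref{long_col2}), whereas you reduce everything to the product identities $s_\Lambda^{(k)}=e_k\,s_\mu^{(k)}$ (the known $k$-rectangle property) and $s_\lambda^{(k)}=e_{k-1}\,s_\mu^{(k)}$, from which the ratio $(n+k)/k$ falls out by coefficient extraction. The reduction steps, the identification of the relevant transition $\lambda\to\Lambda=\mu\cup\square_1$, and the final arithmetic are all correct, and your route would in fact yield the stronger statement $d_\lambda^{(k)}=\binom{|\lambda|}{k-1}d_\mu^{(k)}$ as a byproduct. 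I have checked your sub-rectangle identity against \eqref{deter} in small cases (e.g.\ $s^{(3)}_{(2,1,1)}=e_2h_2$ and $s^{(4)}_{(2,1,1,1)}=e_3h_2$) and it holds, so the lemma itself appears to be true.

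However, the proof of the key lemma has a genuine gap at the crux. You assert that after applying a crossing set $S_{AB}$ the $1$-block row vector satisfies $\alpha\in\{0,1\}^{k-1}$, and your vanishing argument uses only the adjacent pattern $(\alpha_p,\alpha_{p+1})=(0,1)$. But two distinct crossing factors $R_{i_1,j}$ and $R_{i_2,j}$ with $i_1<i_2\le l$ can target the same $1$-block row $j$: e.g.\ for $k=5$, $\mu=(2,2)$, both $R_{1,3}$ and $R_{2,3}$ lie in the index set of \eqref{deter}, and the subset containing both gives $\alpha=(-1,1,1,1)$. Then $\alpha_p\le -1$, the pair $(\alpha_p,\alpha_{p+1})$ need not make two rows of the Jacobi--Trudi matrix coincide (indeed $s_{(-1,1)}=-1\neq0$ in general), so your argument as written does not kill these terms. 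The statement can be repaired: if the maximal hit row $p$ receives $m\ge2$ hits, then the $m$ rows of $\mu$ responsible satisfy $\mu_{l-m+1}\le k-p-m+1$, and $\mu_{l-m+1}\ge2$ forces $p+m\le k-1$; since $p$ is maximal, $\alpha_{p+m}=1$ and then $\alpha_p-p=\alpha_{p+m}-(p+m)$, so the determinant still has two equal rows. Without this (or an equivalent) extra argument — which is exactly the kind of multiplicity bookkeeping the paper's Appendix matchings are built to handle — the case $S_{AB}$ with repeated targets is not covered and the proof is incomplete.
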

\begin{proof}
    We first note $\lambda_{l(\lambda)}= \lambda_{l(\lambda)-k+2}=1$ and $\lambda_{l(\lambda)-k+1} >1$ by the assumption. Let $\Lambda$ denote the $k$-bounded partition that is obtained by adding a box in the first column of $\lambda.$ Note that $\Lambda \notin \mathcal{R}_k.$ 

Suppose $|\lambda|=n$. Let us define 
\[\binom{n}{\lambda}=\frac{n!}{\lambda_1! \ldots \lambda_s!}.\]
Now putting \eqref{deter} and \eqref{strongdim} together we have
\[d_{\lambda}^{(k)}=[x_1\cdots x_n]\prod_{i=1}^{l(\lambda)} \prod_{j=i+1}^{k-\lambda_i+i} (1-R_{ij}) h_{\lambda}. \]
Then if we extend \eqref{opext} to the factorial term defined above, i.e.,
\[R_{ij} \cdot\binom{n}{\lambda}:=\frac{n!}{\lambda_1! \ldots (\lambda_{i}+1)! \ldots(\lambda_j-1)! \ldots  \lambda_s!},\]
we have
\[d_{\lambda}^{(k)}=\prod_{i=1}^{l(\lambda)} \prod_{j=i+1}^{k-\lambda_i+1} (1-R_{ij}) \cdot \binom{n}{\lambda}. \]
Let 
\[T_{\lambda}=\{(i,j) \in \mathbb{Z}^2 \, : \, 1 \leq i \leq l(\lambda), i+1 \leq j \leq k-\lambda_i+i\}\]
be the index of the operators in the product above. Then, for any subset $X \subseteq T_{\lambda},$ we define
\begin{equation} \label{r_x}
    \mathcal{R}_X=\prod_{(i,j)\in X} (-R_{i,j}).
\end{equation}
We also define the product of terms involving only $t$ parts of equal length with no smaller parts, that is, 
$\lambda_{l(\lambda)-t+1}=\dots=\lambda_{l(\lambda)}$. Let $s=l(\lambda)-t$.
\begin{equation}\label{r_triangle}
    \mathbf{R}^\triangle_t:=\prod_{i=1}^{t-1}\prod_{j=1}^{t-i} (1-R_{s+i,s+i+j}).
\end{equation}
If there exists $(i,j) \in X$ with $i\leq l(\lambda)-k+1$ and  $j \geq l(\lambda)-k+2,$ so if we move a box from some part of length 1 to a part with a length larger than 2, we have
\[\mathbf{R}_{k}^{\triangle }\circ R_X (\lambda) =0\]
by Lemma \ref{long_col1}. Similarly, we have $\mathbf{R}_{k+1}^{\triangle }\circ R_X (\Lambda) =0$ by Lemma \ref{long_col2}. Therefore, we only consider sums of products of the form $\mathbf{R}_{k+1}^{\triangle }\circ R_{X_1} (\lambda)+\mathbf{R}_{k+1}^{\triangle }\circ R_{X_2} (\lambda)+\cdots$ where 
\[X_1,X_2,\ldots \subseteq T_{\lambda} \cap \{(i,j) \in \mathbb{Z}^2 \, : \,  i \leq l(\lambda)-k+1, j \leq l(\lambda)-k+1\},\]
that is to say, with the cases in which no box from the parts of length 1 by the operators associated with parts of length larger than 1. 

Then we expand \eqref{r_triangle} using Lemma \ref{trieqk}. For a given interval $[i,j]$ in $S(U)$ the operators $R_{i,i+1}R_{i,i+2}\dots R_{i,j}$ means moving all the boxes on $i+1,\dots,j$ to $i$. Thus the effect of operators $R_{S(U)}$ on the column of 1's is collecting $j_s-i_s$ boxes on position $i_s$. Thus it gives a composition of $k-1$ into the length of the intervals of $S(U)$ and all numbers not 
belonging to an interval giving a term of size 1 in the composition.

Now, $l_1(\lambda)=k-1$, so write $\lambda$ as $\left(\mu,1^{k-1}\right)$ where $\mu$ is a partition with no parts of size 1. We have

\begin{equation*}
\begin{split}
d_{\lambda}^{(k)}=\sum_{U\subseteq [k-1]} (-1)^{|U|} R_{S(U)}(\lambda)&=\sum_{c_1+\dots+c_t=k-1} \binom{n-1}{c_1,c_2,\dots,c_t,\mu}\\
&=
\sum_{c_1+\dots+c_t=k-1} \binom{n-1}{k-1,\mu}\binom{k-1}{c_1,c_2,\dots,c_t}.
\end{split}
\end{equation*}

\[\]
By Lemma \ref{lem:comp} we get that
\[d_{\lambda}^{(k)}= \binom{n-1}{k-1,\mu}\cdot 1.\]
Similarly, we get $d_{\Lambda}^{(k)}= \binom{n}{k,\mu}$ and thus we get the transition rate
\[
P(\lambda,\Lambda)=\frac{\binom{n}{k,\mu}}{n\cdot \binom{n-1}{k-1,\mu}}=\frac{1}{k}.
\]
\end{proof}
\begin{lemma}\label{lem:comp} For a fixed $m\ge 1$ we have
 \[   \sum_{c_1+\dots+c_t=m} \frac{(-1)^{m-t}}{c_1!\dots c_t!}=\frac{1}{m!},
\]
where the sum is over all $2^{m-1}$ compositions of $m$ into positive integers.
\end{lemma}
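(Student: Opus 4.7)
The plan is to prove this identity using exponential generating functions, exploiting the fact that the sum over compositions of $m$ naturally assembles into a power of $e^x-1$.

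First I would note that the quantity $\sum_{c_1+\dots+c_t=m} \frac{1}{c_1!\cdots c_t!}$, where each $c_i \geq 1$, is exactly the coefficient $[x^m]$ of $(e^x - 1)^t$, since $e^x - 1 = \sum_{c\geq 1} x^c/c!$. So the left-hand side of the lemma can be rewritten, using $(-1)^{m-t} = (-1)^m(-1)^t$, as
\[
(-1)^m [x^m] \sum_{t=1}^{m} (-1)^t (e^x-1)^t = (-1)^m [x^m] \sum_{t\geq 1} \bigl(-(e^x-1)\bigr)^t,
\]
where the upper limit of the inner sum can be extended to $\infty$ because $(e^x-1)^t$ has no terms of degree less than $t$.

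Next I would sum the geometric series in $y = e^x - 1$ (which vanishes at $x=0$, making the formal power series manipulations legitimate):
\[
\sum_{t\geq 1} \bigl(-(e^x-1)\bigr)^t = \frac{-(e^x-1)}{1+(e^x-1)} = \frac{1-e^x}{e^x} = e^{-x} - 1.
\]
Finally, extracting the coefficient gives $[x^m](e^{-x}-1) = (-1)^m/m!$ for $m\geq 1$, and multiplying by the outer $(-1)^m$ yields $1/m!$, as desired.

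There is really no significant obstacle; the only thing to be slightly careful about is treating everything as formal power series (which is fine, since $e^x-1$ has zero constant term, so the geometric sum converges formally). As a sanity check one can verify the cases $m=1,2,3$ by hand and recognize the equivalent combinatorial identity $\sum_{t=1}^m (-1)^{m-t} t!\, S(m,t) = 1$ involving Stirling numbers of the second kind, which arises after rewriting $1/(c_1!\cdots c_t!) = \binom{m}{c_1,\dots,c_t}/m!$.
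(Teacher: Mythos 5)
Your proof is correct, and it takes a genuinely different route from the paper. The paper argues by induction on $m$: it splits the sum according to the value of the first part $c_1$, applies the inductive hypothesis to the remaining composition of $m-c_1$, and reduces the claim to the alternating binomial identity $\sum_{c=1}^{m}(-1)^{c-1}\binom{m}{c}=1$. You instead package the whole sum as $(-1)^m[x^m]\sum_{t\ge 1}(-(e^x-1))^t$, sum the geometric series in the formal variable $e^x-1$ (legitimate since it has zero constant term) to get $e^{-x}-1$, and read off the coefficient. The two arguments are morally the same computation --- extracting coefficients from your geometric-series identity reproduces exactly the paper's recursion on $c_1$ --- but yours avoids induction entirely and makes the connection to surjection counts $t!\,S(m,t)$ transparent, while the paper's is more elementary and self-contained, needing only the alternating binomial sum. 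Your coefficient extractions ($[x^m](e^x-1)^t$ for compositions into $t$ positive parts, and $[x^m](e^{-x}-1)=(-1)^m/m!$ for $m\ge 1$) and the justification for extending the sum over $t$ to infinity are all sound.
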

\begin{proof}
    By induction over $m$. Obvious for $m=1$. For $m>1$ we sum over the value of $c_1$ and use induction.

 \begin{align*}
 \sum_{c_1+\dots+c_t=m} \frac{(-1)^{m-t}}{c_1!\dots c_t!}
 &=\frac{(-1)^{m-1}}{m!}+\sum_{c_1=1}^{m-1}\frac{(-1)^{c_1-1}}{c_1!}\sum_{c_2+\dots+c_t=m-c_1} \frac{(-1)^{m-c_1-t+1}}{c_2!\dots c_t!}=\\
 &=\sum_{c_1=1}^{m}\frac{(-1)^{c_1-1}}{c_1!}\frac{1}{(m-c_1)!}.
 \end{align*}
 The remaining sum is just the well known formula for summing binomial coefficients with alternating signs and the result follows.
\end{proof}

\subsection{TASEP formulation of the finite chain}\label{sec:TASEP}

We may also describe the $k!$ states of the Markov chain as cyclic permutations of length $k+1$. We will fix 
can rotate every cyclic permutation, so we may assume that the entry 
$(k+1)$ to always be the last in one-line notation and thus identify cyclic permutations of length $k+1$ with permutations of length $k$, using $S_k$ for both sets. 
We will define a TASEP (Totally Asymmetric Simple Exclusion Process) on cyclic permutations with the rule that a number $i$ can switch places with the number $j$ to its left only if $j>i$. That is, small numbers can move to the left.

We will now describe the bijection $\alpha :S_k\to \mathcal{R}_k$. First, we consider a modulo $k+1$ particle process on the integer line in which the particles may jump left one step. It starts with one particle on every non-negative integer position and no particles on the negative positions. There can never be two particles at the same position. The jump of a particle at position $r$ means that it moves to position $r-1$ if it is empty. When a particle at position $r$ jumps, all particles at positions equal to $r \pmod{k+1}$ will also jump if the position at $r-1\pmod{k+1}$ is empty. In general, for every state of this process, if a position $x$ has a particle, then all positions $y>x,y\equiv x \pmod{k+1}$ will have particles as well. 
This is the same process described in several papers, for example \cite{AL14,L15}, but we
need the particles to move left rather than right.

For any such state of particles on $\mathbb{Z},$ we bijectively associate a partition $\kappa$, in Russian notation. Its boundary on $\mathbb{R}^2$ is obtained from particles on $\mathbb{Z}$ by adding line segments in the southeast direction for vacant sites in $\mathbb{Z}$ and line segments in the northeast direction for occupied sites. It is well known that the partition created $\kappa$ is a $(k+1)$-core for any state of the process described in the previous paragraph. See Figure \ref{fig:modk} for an example. We will now give numbers $1,\dots,k+1$ to the particles iteratively for a given state. The particle furthest to the left will be assigned number 1 and so will all particles at the same position $\pmod{k+1}$. Then the unmarked particle furthest to the left will get number 2 and so on. This will give a mapping from particles with numbers $1,\dots,k+1$ to positions $r \pmod{k+1}$ for $r=0,\ldots,k$. Thus, a cyclic permutation $\pi$, a state in the TASEP. It is a well-known fact that the jumps in the $\pmod{k+1}$-process are in fact equivalent to moving from a core partition to another that covers it in the weak order for $(k+1)$-cores, see Def. \ref{weak order}.

From the $(k+1)$-core $\kappa$ we can apply the bijection $\mathfrak{p}(\kappa)$ to a $k$-bounded partition. Then, as a final step, we remove rectangles as described in 
Section \ref{sec:rectangle} to obtain a partition $\lambda\in \mathcal{R}_k$. Note that the a rectangle $(i,i,\dots,i)$ of $k+1-i$ parts in the $k$-bounded partition, corresponds to the particles numbered $1,\dots,k+1-i$ having passed numbers, in fact vacant positions, $k-+2-i,\dots,k+1$ one more time. The removal of the rectangle does hence not change the state of the TASEP. We conlude the following.

\begin{theorem}
    The map $\alpha :S_k\to \mathcal{R}_k$ that maps $\pi$ to $\lambda$ is a bijection.
\end{theorem}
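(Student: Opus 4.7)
My plan is to leverage the cardinality equality $|S_k|=k!=|\mathcal{R}_k|$, so that injectivity of $\alpha$ already suffices. I would realize $\alpha$ as a composition of standard bijections together with the quotient by rectangle addition, and then verify that the quotients on the two sides agree.

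Concretely, the map decomposes as
\[
S_k \,\longleftrightarrow\, \{\text{cyclic perms of }[k+1]\} \,\longleftrightarrow\, (\text{labeled particle states})/\!\sim \,\longleftrightarrow\, \mathcal{P}_k/\!\approx \,\longleftrightarrow\, \mathcal{R}_k,
\]
where $\sim$ identifies particle states that differ only in how many times each labeled block has circled its residue class, and $\approx$ is generated by $\lambda \mapsto \lambda\cup\square_i$. The first arrow is the normalization placing $k{+}1$ in the last position; the second is the labeling procedure stated just before the theorem; the third composes the classical boundary--path bijection with $(k+1)$-cores with $\mathfrak{p}$; and the last identifies $\mathcal{R}_k$ as the canonical transversal for $\approx$.

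The pivotal step I would check is that $\sim$ and $\approx$ correspond under the middle arrow. I would show that adjoining $\square_i=(i^{\,k+1-i})$ to $\lambda$ translates, through $\mathfrak{c}$ and the boundary--path correspondence, into exactly one extra lap by each of the particles labeled $1,\dots,k+1-i$. This is flagged in the paragraph just before the theorem and can be made rigorous either by direct book-keeping of the hook additions implementing $\mathfrak{c}(\lambda\cup\square_i)$ row by row, or by appealing to the affine--Grassmannian form of the $k$-rectangle property (Corollary~8.3 of \cite{L08}), which implies that $\lambda\mapsto\lambda\cup\square_i$ corresponds to left multiplication by a specific product of simple reflections whose abacus action is the asserted block shift. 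Conversely, two labeled states sharing a cyclic permutation differ only by such lap shifts per residue class, and these are generated by rectangle additions.

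The main obstacle will be pinning down precisely which residue classes are affected when $\square_i$ is added, and checking that no spurious extra shifts creep in. I would handle this by first verifying the pattern on small cases with explicit use of $\mathfrak{c}$ and inducting on the number of rows of $\lambda$, or alternatively by invoking the affine--Weyl dictionary wholesale. Once the match of quotients is in hand, injectivity of $\alpha$ is immediate from the factorization above, and bijectivity follows by the cardinality count $|S_k|=|\mathcal{R}_k|=k!$.
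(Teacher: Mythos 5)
Your proposal is correct in outline but reaches the conclusion by a different route than the paper. Both arguments share the same setup: the chain through labelled particle states / $(k+1)$-cores / $k$-bounded partitions, and the key observation that adjoining $\square_i$ amounts to one extra lap by the particles labelled $1,\dots,k+1-i$ and hence does not change the TASEP state (this is exactly the remark the paper makes just before the theorem). Where you diverge is in how injectivity is obtained. You propose to prove the converse identification of the two quotients and then finish with the count $|S_k|=|\mathcal{R}_k|=k!$. The paper instead writes down $\alpha^{-1}$ explicitly: having placed $k+1,k,\dots,i+1$, the entry $i$ is inserted $l_i(\lambda)$ cyclic steps to the left of $i+1$, and one checks that the cyclic distance from $i+1$ to $i$ (ignoring smaller entries) records the number of boxes added to column $i$ modulo $k-i+1$, i.e.\ $l_i(\lambda)$ after rectangle removal. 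This gives a two-sided inverse directly, needs no cardinality count, and moreover produces a formula that the paper reuses (e.g.\ to see that $k$-conjugation reverses the permutation). Your approach buys a cleaner conceptual picture (two equivalence relations coinciding) at the cost of hiding the combinatorial statistic.

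One caution: the thinnest point of your plan is precisely the direction that delivers injectivity, namely the sentence ``two labelled states sharing a cyclic permutation differ only by such lap shifts per residue class, and these are generated by rectangle additions.'' The rectangle moves generate only the lap-shift vectors $(1^{\,k+1-i},0^{\,i})$, so you must show that the difference of lap counts between two states with the same labelling is an integer combination of these, equivalently that the permutation already determines each $l_i$ modulo $k-i+1$ (in particular the lap count of the class labelled $k+1$ is pinned down). That is exactly the content of the paper's explicit inverse, so your argument does not avoid this computation -- it only relocates it. If you carry out that verification (by the row-by-row bookkeeping of $\mathfrak{c}$ or via the affine Grassmannian form of the $k$-rectangle property, both of which are viable), the proof is complete.
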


In Figure \ref{fig:modk}, the permutation $1 4 2 3 5$ is mapped to the bounded partition $(4,3,1)$ and then removing 
a $4\times 1$-rectangle to $\lambda=(3,1)$.
Thus $\alpha(1 4 2 3 5)= (3,1)$.

A convenient way to find $\pi$ is to label the up-steps (northeast steps) in the $(k+1)$-core.
From that labeling, we also realize that when particle $i$ in the TASEP jumps left, it is the same as adding a box to all positions labeled $i$ in the $(k+1)$-core $\kappa$. That is the same as adding one box to the column $i$ of $\lambda$. Note how the permutation $\pi$ encodes which boxes can be added to the partition $\lambda$. In Figure \ref{fig:modk}, we see that we cannot add any box to column 3 because column 2 has the same height, which corresponds to that the larger number 3 cannot pass the smaller number 2 in the permutation. It is also not possible to add a box in the fourth column, which is more difficult to see directly from $\lambda$, but can be seen in the labeling of $\kappa$ and even more easily in $\pi$ where 4 has 1 on its left.

\begin{figure}

 \scalebox{0.8}{
\begin{tikzpicture} [scale=0.5]
\draw [thick] (-6.5,7.5) -- (0.5,0.5)--(8.5,8.5);
\draw (-6,0) node(-6) {$\vac$};
\draw (-5,0) node(-5) {$\vac$};
\draw (-4,0) node {$\vac$};
\draw (-3,0) node(-3) {$\vac$};
\draw (-2,0) node(-2) {$\occ$};
\draw (-1,0) node(-1) {$\vac$};
\draw (0,0) node(0) {$\occ$};
\draw (1,0) node(1) {$\occ$};
\draw (2,0) node {$\vac$};
\draw (3,0) node {$\occ$};
\draw (4,0) node {$\occ$};
\draw (5,0) node {$\occ$};
\draw (6,0) node {$\occ$};
\draw (7,0) node {$\vac$};
\draw (8,0) node {$\occ$};
\draw (9,0) node {$\occ$};
\draw (10,0) node {$\occ$};
\draw (11,0) node {$\occ$};
\draw (12,0) node {$\occ$};
\draw [thick] (-2.5,3.5)--(-1.5,4.5)--(1.5,1.5);
\draw [thick] (-1.5,2.5)--(1.5,5.5)--(3.5,3.5);
\draw [thick] (-0.5,1.5)--(6.5,8.5)--(7.5,7.5);
\draw [thick] (0.5,4.5)--(2.5,2.5);
\draw [thick] (3.5,5.5)--(4.5,4.5);
\draw [thick] (4.5,6.5)--(5.5,5.5);
\draw [thick] (5.5,7.5)--(6.5,6.5);
\draw (-6,-1.5) node {$3$};
\draw (-5,-1.5) node {$4$};
\draw (-4,-1.5) node {$0$};
\draw (-3,-1.5) node {$1$};
\draw (-2,-1.5) node {$2$};
\draw (-1,-1.5) node {$3$};
\draw (0,-1.5) node {$4$};
\draw (1,-1.5) node {$0$};
\draw (2,-1.5) node {$1$};
\draw (3,-1.5) node {$2$};
\draw (4,-1.5) node {$3$};
\draw (5,-1.5) node {$4$};
\draw (6,-1.5) node {$0$};
\draw (7,-1.5) node {$1$};
\draw (8,-1.5) node {$2$};
\draw (9,-1.5) node {$3$};
\draw (10,-1.5) node {$4$};
\draw (11,-1.5) node {$0$};
\draw (12,-1.5) node {$1$};
\draw (13.5,-1.8) node {$\ldots$};
\draw (-2,0) node {\scriptsize 1};
\draw (0,0) node {\scriptsize 2};
\draw (1,0) node {\scriptsize 3};
\draw (3,0) node {\scriptsize 1};
\draw (4,0) node {\scriptsize 4};
\draw (5,0) node {\scriptsize 2};
\draw (6,0) node {\scriptsize 3};
\draw (8,0) node {\scriptsize 1};
\draw (9,0) node {\scriptsize 4};
\draw (10,0) node {\scriptsize 2};
\draw (11,0) node {\scriptsize 3};
\draw (12,0) node {\scriptsize 5};
\draw (13.5,-0.3) node {$\ldots$};
\draw (-2.3,4.2) node {\scriptsize 1};
\draw (-0.3,4.2) node {\scriptsize 2};
\draw (0.7,5.2) node {\scriptsize 3};
\draw (2.7,5.2) node {\scriptsize 1};
\draw (3.7,6.2) node {\scriptsize 4};
\draw (4.7,7.2) node {\scriptsize 2};
\draw (5.7,8.2) node {\scriptsize 3};
\draw (11,3) node {$\overset{\mathfrak{p}}{\to}$};
\draw (15,3) node {$\begin{ytableau} \\  &  &  \\   & & & \\ \end{ytableau}$};
\draw (18,3) node {${\to}$};
\draw (22,3) node {$\begin{ytableau} \\  &  &  \\ \end{ytableau}$};
\end{tikzpicture}
}

    \caption{A state in the $\pmod5$-process with the corresponding $5$-core $(7,3,1)$ and cyclic permutation $1 4 2 3 5$. Followed by the 4-bounded partition image $(4,3,1)$ under $\mathfrak{p}$ and its projection to $(3,1)\in\mathcal{R}_k$ using the $k$-rectangle property. }
    \label{fig:modk}
\end{figure}
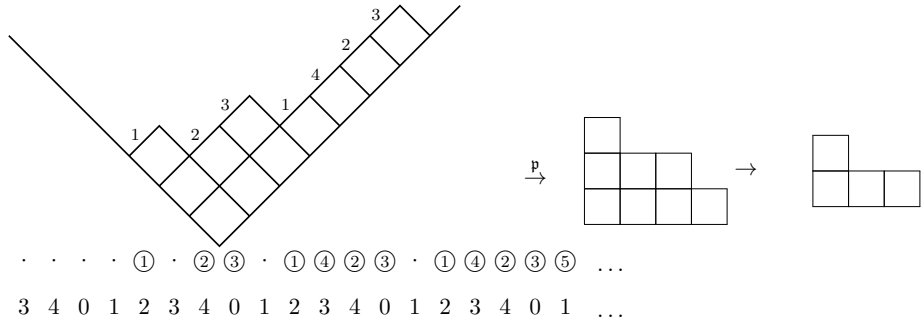

Let us now describe $\alpha^{-1}$ in a way that does not use $(k+1)$-cores.
First, recall that for a $k$-bounded partition $\lambda$ we use $l_i(\lambda)$ to denote the number of parts in $\lambda$ of size $i$. For every state $\lambda\in\mathcal{R}_k$ in the finite $k$-chain, we have $0\le l_i(\lambda)\le k-i$. For a state $\lambda$ we construct the cyclic permutation $\alpha^{-1}(\lambda)$ recursively as follows. At first, we only have $(k+1),$ which will remain in the last position through out. We first place $k$, for which there is no choice, so that the permutation becomes $k\ (k+1)$. Now, assume that we have already placed $i+1,\dots (k+1)$. Then we count from $i+1$ to the left $l_i(\lambda)$ (cyclic) steps and place $i$ there.
 \begin{example} Let $\lambda$ be the $5$-bounded partition with parts $3, 3, 1,1$ and thus the vector $(l_1(\lambda),\dots,l_4(\lambda))=(2,0,2,0)$. After the first step, we have $5\ 6$.
\\
Since $l_4(\lambda)=0$, 4 is placed directly to the left of 5, and we get   $4\ 5\ 6$.\\
Since $l_3(\lambda)=2$, 3 is placed two steps to the left of 4, counting cyclically we get   $4\ 3\ 5\ 6$.\\
Since $l_2(\lambda)=0$, 2 is placed directly to the left of 3, we get   $4\ 2\ 3\ 5\ 6$.\\
Finally $l_1(\lambda)=2$, so 1 is placed two steps to the left of 2, so we get   $\alpha^{-1}(3,3,1,1)=4\ 2\ 3\ 5\ 1\ 6$.
\end{example}

To see that this is indeed the inverse, note that if $i$ is $s$ steps to the left of $i+1$ corresponds to $i$ having jumped $s\pmod{k-i}$ times more than $i+1$. Note that we can ignore the numbers smaller than $i$ since none of them can pass them. This corresponds to that a box has been placed in column $i$ a total of $s\pmod{k-i+1}$ times (shift in modulus to include when $i$ jumps over $i+1$) and thus, after removing $k$-rectangles we get $l_i=s$.

As special cases, we find that the empty partition maps to $1\ 2\ \dots k\ (k+1)$ and the maximal partition maps to the reverse $k\ (k-1)\ \dots 2\ 1 \ (k+1)$.  
Note that it follows directly from the description of $\alpha^{-1}$ that  $\alpha^{-1}(\lambda^{\omega_k})$ is obtained by reversing the order of the elements of $\alpha^{-1}(\lambda)$, but keeping $(k+1)$ fixed. 

From the above discussion, one can show the following, which states that we are equivalently studying a TASEP on the ring $\mathbb{Z}_{k+1}$.

\begin{theorem}\label{thm:TASEP}
For $\lambda,\mu \in \mathcal{R}_k,$ $\mu$ can be obtained from $\lambda$ by adding a box to the column $i$ of $\lambda$ if and only if the jump of particles labeled $i$ to the left in the TASEP moves $\alpha^{-1}(\lambda)$ to $\alpha^{-1}(\mu)$. In particular, if $i$ has a smaller number on its left in $\alpha^{-1}(\lambda),$ then adding a box to column $i$ of $\lambda$ is not a valid transition in the chain. 
\end{theorem}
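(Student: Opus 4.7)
The plan is to assemble the chain of bijections already set up in Section \ref{sec:TASEP}: a state of the mod-$(k+1)$ process corresponds to a $(k+1)$-core $\kappa$, which maps via $\mathfrak{p}$ to a $k$-bounded partition, which after quotienting by $k$-rectangles gives $\lambda\in\mathcal{R}_k$. The cyclic permutation $\alpha^{-1}(\lambda)$ records the labels of the residue classes mod $k+1$, assigned in order of increasing leftmost occurrence on the integer line.

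First I would identify ``add one box in column $i$ of $\lambda$'' with ``single left-jump of the particle labeled $i$.'' At the level of $(k+1)$-cores, a collective left-jump of all particles of a fixed residue $r$ adds precisely those addable cells of $\kappa$ whose content is $\equiv r-1 \pmod{k+1}$; this is exactly a weak cover in the sense of Definition \ref{weak order}. Applying $\mathfrak{p}$ collapses this to adding exactly one box in a single column of $\mathfrak{p}(\kappa)$, and by the labeling of up-steps of the boundary of $\kappa$ (recalled in and below Figure \ref{fig:modk}) that column is indexed exactly by $i$. One then checks that passage to the $k$-rectangle quotient commutes with the operation: removing a $\square_j$ from $\mathfrak{p}(\kappa)$ shifts the leftmost positions of the labels $1,\ldots,k+1-j$ each by $-(k+1)$, which preserves their relative order and hence leaves $\alpha^{-1}(\lambda)$ unchanged.

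The validity statement is then a direct translation of the mod-$(k+1)$ exclusion rule. Let $p_c$ denote the leftmost occurrence of the residue class labeled $c$. A collective left-jump of the particles with label $i$ is admissible iff the position $p_i-1$ is vacant; if $j$ denotes the label of the class with residue $p_i-1\pmod{k+1}$, vacancy of $p_i-1$ is equivalent to $p_j>p_i$, which in turn is equivalent to $j>i$ because labels are assigned in order of increasing leftmost position. Since $\alpha^{-1}(\lambda)$ lists the labels in cyclic residue order, $j$ is precisely the entry cyclically immediately to the left of $i$. The convention of fixing $k+1$ at the end of the one-line notation is compatible with this: the only wrap-around case is an entry whose cyclic left-neighbor is $k+1$, which is always admissible since $k+1>i$ for every $i\le k$.

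The main obstacle is the bookkeeping in the first step: one has to verify carefully that removing a $\square_j$ from $\mathfrak{p}(\kappa)$ corresponds precisely to the particles with labels $1,\ldots,k+1-j$ (and no others) each completing one extra cyclic lap, so that the induced cyclic permutation is invariant under the quotient. Everything else is assembling the standard correspondences already recalled in Section \ref{sec:TASEP} with the exclusion rule of the mod-$(k+1)$ process.
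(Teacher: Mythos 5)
Your proposal is correct and follows essentially the same route as the paper, which gives no separate proof but derives the theorem from the preceding discussion in Section \ref{sec:TASEP}: the labeling of up-steps identifying a left-jump of the particles labeled $i$ with adding a box to column $i$, the observation that removing a $k$-rectangle corresponds to particles $1,\dots,k+1-j$ completing one extra lap and hence leaves the TASEP state unchanged, and the translation of the exclusion rule into ``$i$ has a smaller number on its left.'' Your write-up is in fact somewhat more explicit than the paper's about the admissibility equivalence $p_j>p_i \Leftrightarrow j>i$, but the underlying argument is the same.
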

Therefore, the transitions in the TASEP are in agreement with the transitions in the Markov chain, so that if we assign the same probabilities to the TASEP with the $k$-chain, we have an equivalent process. 
\begin{corollary}
    A rectangle $i^{k+1-i}$ is created, and thus removed, in the finite $k$-chain exactly when particle $i$ passes particle $i+1$ in the TASEP-formulation.
\end{corollary}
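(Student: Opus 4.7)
The plan is to prove the corollary by analyzing the cyclic permutation $\pi = \alpha^{-1}(\lambda)$ through the recursive construction of $\alpha^{-1}$ given above, characterizing precisely when $i+1$ sits immediately to the left of $i$ in $\pi$. By the TASEP swap rule, particle $i$ passes particle $i+1$ exactly in that configuration.

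First I would look at the moment $i$ is inserted in the construction. At that stage, the cyclic arrangement consists of the $k+1-i$ elements $\{i+1, i+2, \ldots, k+1\}$, and $i$ is placed just to the left of the element $l_i$ steps left of $i+1$. A direct count shows that the element ending up immediately to the left of $i$ after this insertion is the one that sat $l_i + 1$ steps left of $i+1$ pre-insertion; since the cyclic arrangement has $k+1-i$ elements, this is $i+1$ itself if and only if $l_i + 1 \equiv 0 \pmod{k+1-i}$, i.e., $l_i(\lambda) = k - i$.

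Next I would observe that every subsequent insertion involves an element $j < i$, and each one either places $j$ just to the left of $i$ (in which case $j$ becomes the new immediate left of $i$) or leaves the immediate left of $i$ unchanged. Consequently, the immediate left of $i$ in the final $\pi$ is either the stage-$i$ immediate left (some element $\ge i+1$) or some $j < i$; no element of $\{i+2, \ldots, k+1\}$ can appear as the immediate left unless it was already there at stage $i$.

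Finally I would combine these observations with Theorem \ref{thm:TASEP}. The move ``add a box to column $i$'' is a valid transition of the finite chain iff the immediate left of $i$ in $\pi$ exceeds $i$, which by the previous step forces the left neighbor to coincide with the stage-$i$ immediate left. A rectangle $\square_i = i^{k+1-i}$ is created and immediately removed in the chain exactly when $l_i(\lambda) = k-i$ and a box is added to column $i$ (so that $l_i$ would become $k-i+1$, exceeding its bound). Putting the pieces together, this happens iff the valid left neighbor of $i$ equals $i+1$, i.e., particle $i$ passes particle $i+1$ in the TASEP. The main obstacle will be the second step: carefully verifying that later insertions of elements $j < i$ cannot push any $X \in \{i+2, \ldots, k+1\}$ into the immediate-left position of $i$ unless $X$ was already there at stage $i$, so that the dichotomy on the final immediate left is complete.
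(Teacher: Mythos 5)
Your proposal is correct and follows essentially the same route as the paper: both arguments reduce to showing that $l_i(\lambda)=k-i$ is equivalent to $i$ sitting directly to the right of $i+1$ with no particles from $\{i+2,\dots,k+1\}$ in between, and then invoke the TASEP jump rule to conclude that the rectangle-creating move must be the swap of $i$ and $i+1$. You derive this adjacency from the static recursive construction of $\alpha^{-1}$ (with a careful check that later insertions of smaller elements cannot change which large particle is nearest to the left of $i$), whereas the paper phrases it via the pass-counting interpretation from Theorem \ref{thm:TASEP}; these are two readings of the same fact.
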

\begin{proof}
    By Theorem \ref{thm:TASEP} we know that the height of the column $i$ minus the height of column $i+1$ is equal to the number of particles that $i$ has passed that $i+1$ has not passed. Hence, for a state $\lambda$ with $k-i$ parts $i$, the TASEP state $\alpha^{-1}(\lambda)$ has  
    $i$ to the right of $i+1$, with no particles labeled $i+2,\dots,k+1$ in between. A jump when $i$ passes $i+1$ gives one more box to the column $i$ and thus a rectangle $i^{k+1-i}$.
\end{proof}

\begin{example}
    Take $k=3$ and $\lambda=(2,1)$, see Figure \ref{fig:k=3}. There are at most $k=3$ possible transitions. The first possibility is to add a box in the first column (1 jumping in the TASEP) to get $(2,1,1)$. The transition rate for this is $\frac{d^{(3)}_{(2,1,1)}}{4\cdot d^{(3)}_{(2,1)}}=\frac{6}{4\cdot 2}=\frac{3}{4}$, see Example \ref{ex:dim}.

    The second possibility is adding a box in the second column (2 jumping) creating the partition $(2,2)$, which by the rectangle property is removed and we end up with the empty partition. The transition rate is similarly $\frac{2}{4\cdot 2}=\frac{1}{4}$.
    
    The third possibility would theoretically be adding a box in the third column (3 jumping) creating $(3,1)$ (and by removing a 3-rectangle, thus $(1)$). But this is not possible. 
    It can be seen either by moving to the corresponding $4$-core, 
    which is still $(2,1)$, and note that $(3,1)$ is not a 4-core and thus adding a box in the third column would not give us the 4-core associated with $(3,1)$.  Recall Definition \ref{weak order} and note that the only 4-cores possible from $(2,1)$ are $(3,1,1)$ and $(2,2)$, corresponding to the first two cases. Or we could see from the TASEP-formulation that 3 cannot jump because there is a smaller number to the left of it.
\end{example}

\begin{figure}
\begin{center}

\begin{tikzpicture}[scale=3, node distance=2cm]

\node (A) at (0, 0) {\large 123\textcolor{gray}{4}};
\node (B) at (-0.866, 0.5) {\large 132\textcolor{gray}{4}};
\node (C) at (-0.866, 1.5) {\large 312\textcolor{gray}{4}};
\node (D) at (0, 2) {\large 321\textcolor{gray}{4}};
\node (E) at (0.866, 1.5) {\large 231\textcolor{gray}{4}};
\node (F) at (0.866, 0.5) {\large 213\textcolor{gray}{4}};

\node (c) at (0, -0.15) { $\pi(\varnothing)=\frac{3}{20}$};
\node (c) at (-1.12, 0.35) { $\pi(2,1)=\frac{4}{20}$};
\node (c) at (1.12, 0.35) { $\pi(1,1)=\frac{3}{20}$};
\node (c) at (-1.12, 1.65) { $\pi(2)=\frac{3}{20}$};
\node (c) at (1.12, 1.65) {$\pi(1)=\frac{4}{20}$};
\node (c) at (0, 2.15) {$\pi(2,1,1)=\frac{3}{20}$};

\node (c) at (0.39, 0.8) { $1$};
\node (c) at (-0.5, 0.2) { $\frac{1}{4}$};
\node (c) at (-0.38, 1.2) { $\frac{3}{4}$};
\node (c) at (-0.925, 1) { $\frac{2}{3}$};
\node (c) at (-0.39, 0.8) { $\frac{1}{3}$};
\node (c) at (-0.5, 1.8) { $\frac{1}{3}$};
\node (c) at (0.5, 1.8) { $\frac{1}{3}$};
\node (c) at (0.38, 1.2) { $\frac{1}{3}$};
\node (c) at (0.925, 1) { $\frac{1}{2}$};
\node (c) at (0, 1.4) { $\frac{1}{2}$};
\node (c) at (0, 0.6) { $\frac{2}{3}$};
\node (c) at (0.5, 0.2) { $\frac{1}{3}$};

\draw[->]
  (A) edge (E) (B) edge (A) (B) edge (D) ;
\draw[->]
 (C) edge (A) (C) edge (B)  ;
\draw[->]
(D) edge (C) (D) edge (E)  (D) edge (F)  ;
\draw[->]
(E) edge (C) (E) edge (F)   ;
\draw[->]
(F) edge (B) (F) edge (A)   ;
\end{tikzpicture}

\vspace{-0.5cm}

\end{center}
\caption{The Markov chain on $3$-bounded partitions. The cyclic permutation $\alpha^{-1}(\lambda)$ is given for each partition and the stationary probability $\pi(\lambda)$.}
\vspace{-0.5cm}
\label{fig:k=3}
\end{figure}
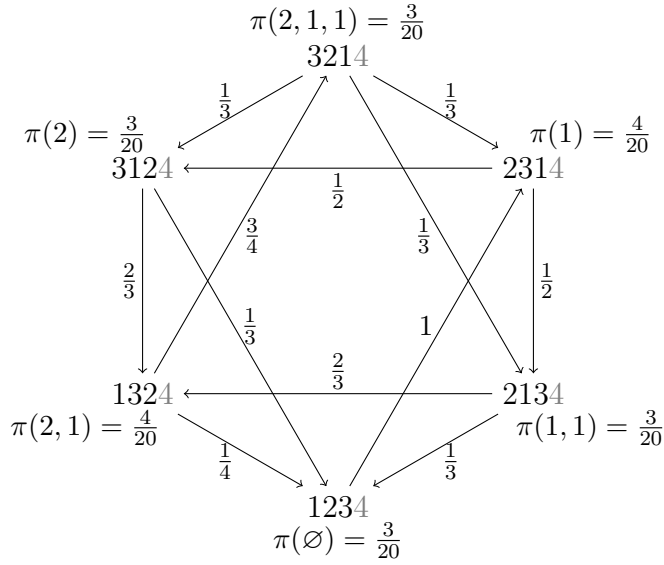

\begin{figure}[h!]
    \centering
    \includegraphics[width=0.9\textwidth]{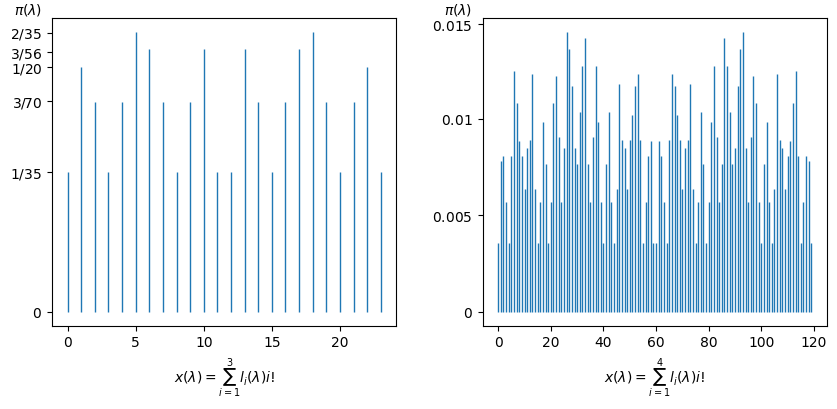}
    \caption{The stationary distribution of the Markov chain for $4$-bounded and $5$-bounded partitions from data. The partitions in $\mathcal{R}_k$ are indexed according to factorial base on the $x$-axis, see Section 4.1. of \cite{K14}, where $l_i(\lambda)$ is the number of parts of size i in $\lambda \in \mathcal{R}_k$.}
    \label{fourfive}
\end{figure}

\subsection{Conjugates and complements of partitions}
 Referring to Definition \ref{conjugate}, we note the following fact on $k$-conjugate partitions and the transition probabilities. We are surprised that we could not find this stated in the literature, and we thank Jennifer Morse for clarifying that this indeed follows from the characterization in terms of marked strong tableaux.
\begin{lemma} \label{conj}
For $\lambda,\mu \in \mathcal{R}_k,$ we have
$d_{\lambda}^{(k)}=d_{\lambda^{\omega_k}}^{(k)} \tn{ and } P(\lambda,\mu)=P(\lambda^{\omega_k},\mu^{\omega_k})$.
\end{lemma}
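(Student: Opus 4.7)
The plan is to establish the dimension equality $d_\lambda^{(k)}=d_{\lambda^{\omega_k}}^{(k)}$ via a shape-conjugating bijection on standard strong marked tableaux, and then deduce the transition probability equality as a formal consequence of the explicit formula for $P(\lambda,\mu)$.

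Recall that $d_\lambda^{(k)}$ counts standard strong marked tableaux of shape $\mathfrak{c}(\lambda)$, i.e., chains $\emptyset=\kappa^{(0)}\Rightarrow_k\kappa^{(1)}\Rightarrow_k\cdots\Rightarrow_k\kappa^{(n)}=\mathfrak{c}(\lambda)$ together with markings $(c_j)_{j=1}^n$; since the weight is $(1^n)$, the monotonicity condition on the $c_j$ in Definition \ref{def:tab} is vacuous, and each $c_j$ is merely the content of the southeast-most cell of a chosen connected component of $\kappa^{(j)}\setminus\kappa^{(j-1)}$. My bijection simultaneously transposes every core in the chain, producing $\emptyset=(\kappa^{(0)})'\Rightarrow_k\cdots\Rightarrow_k(\kappa^{(n)})'=\mathfrak{c}(\lambda^{\omega_k})$. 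This is a valid strong chain because transposition preserves $(k+1)$-cores and inclusion, and, by the very definition of $\omega_k$, preserves the size $|\mathfrak{p}(\cdot)|$. For the markings I use that each connected component of the skew shape of a strong cover is a border strip whose transpose is again a border strip, with its southeast-most cell being the transpose of the northwest-most cell of the original and hence of negated content. I therefore set $c_j'$ to be the negation of the content of the northwest-most cell of the component selected by $c_j$. Uniqueness of the southeast-most cell per component makes this assignment invertible, proving $d_\lambda^{(k)}=d_{\lambda^{\omega_k}}^{(k)}$.

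For the transition probabilities I use
\[
P(\lambda,\mu)=\frac{1}{|\lambda|+1}\cdot\frac{d_\nu^{(k)}}{d_\lambda^{(k)}},
\]
where $\nu=\mu$ if $\lambda\to_k\mu$ stays in $\mathcal{R}_k$, and $\nu=\Lambda$ with $\Lambda=\mu\cup\square_i$ if the weak cover exits $\mathcal{R}_k$. The weak cover condition $|\Lambda|=|\lambda|+1,\ \lambda\subseteq\Lambda,\ \lambda^{\omega_k}\subseteq\Lambda^{\omega_k}$ is manifestly symmetric under $\omega_k$, so $\lambda\to_k\Lambda$ iff $\lambda^{\omega_k}\to_k\Lambda^{\omega_k}$. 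Applying the standard involution $\omega$ on symmetric functions to the $k$-rectangle identity $s^{(k)}_{\mu\cup\square_i}=s_{\square_i}\,s^{(k)}_\mu$, together with the identities $\omega(s^{(k)}_\mu)=s^{(k)}_{\mu^{\omega_k}}$ and $\omega(s_{\square_i})=s_{\square_{k+1-i}}$, yields $(\mu\cup\square_i)^{\omega_k}=\mu^{\omega_k}\cup\square_{k+1-i}$. Hence the case distinction in $P$ is preserved by $\omega_k$, and combined with the dimension equality this gives $P(\lambda,\mu)=P(\lambda^{\omega_k},\mu^{\omega_k})$.

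The main obstacle is the geometric verification used in the bijection: one must confirm that each connected component of $\kappa^{(j)}\setminus\kappa^{(j-1)}$ is a border strip whose northwest and southeast extremes are swapped (with contents negated) under transposition, and that the component-selection encoded by the marking is respected throughout. A secondary but nontrivial point is the appeal to $\omega(s^{(k)}_\mu)=s^{(k)}_{\mu^{\omega_k}}$, which is standard in $k$-Schur theory (see \cite{LLMSSZ14}) but not established in the present paper.
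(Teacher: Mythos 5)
Your proof is correct and follows essentially the same route as the paper: conjugate every core in the strong marked chain, note that for weight $(1^n)$ the content condition \eqref{cond:cont} is vacuous so the marking amounts to choosing a connected component of each skew shape, which transposition preserves, and then transfer the dimension equality to the transition probabilities via the $\omega_k$-invariance of the weak cover relation. Two minor remarks: the border-strip verification you flag as the main obstacle is not actually needed (it suffices that transposition bijects connected components and each component has a unique southeast-most cell), and on the other hand your explicit identity $(\mu\cup\square_i)^{\omega_k}=\mu^{\omega_k}\cup\square_{k+1-i}$ treats the rectangle-removal case of the finite chain's transition matrix more carefully than the paper's proof, which only invokes \eqref{inftrans}.
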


\pf By \eqref{strongdim}, we have
\begin{equation} \label{kost}
    d_{\lambda}^{(k)}=\mathbf{K}_{\lambda, (1,\cdots,1)}^{(k)} \cdot [x_1\cdots x_n]m_{(1,\cdots,1)}=\mathbf{K}_{\lambda, (1,\cdots,1)}^{(k)}.
\end{equation}
 So, if we take $\kappa=\mathfrak{c}(\lambda)$ as the associated core partition, we want to show that the number of strong marked tableaux of shape $\mathfrak{p}(\kappa)$ and $\mathfrak{p}(\kappa')$ with the same weight $\beta=(1,\ldots,1)$ are the same. Observe that, by the value of $\beta$ in the definition of strong marked tableaux, see Def. \ref{def:tab},  we can drop the condition \eqref{cond:cont} on the contents of markings. Then we observe that the conjugates of core partitions in \eqref{strchain} also form a chain of strong covers leading to $\kappa'$, since the conjugation does not change the containment condition in the definition of strong cover. 
 Again, because of $\beta$, the marking merely counts the number of connected components of skew diagrams $\kappa^{(i+1)} \backslash \kappa^{(i)}$ for $i=1,\ldots,n$, which is not changed either under conjugation. Then it follows from \eqref{kost} that $d_{\lambda}^{(k)}=d_{\lambda^{\omega_k}}^{(k)}$. For the second result, we first observe that  $\lambda \rightarrow_k \mu$ if and only if $\lambda^{\omega_k} \rightarrow_k \mu^{\omega_k}$ by the definition of weak order. Therefore, $ P(\lambda,\mu)=0$ if and only if $P(\lambda^{\omega_k},\mu^{\omega_k})=0.$ In case they are non-zero, we also have $ P(\lambda,\mu)=P(\lambda^{\omega_k},\mu^{\omega_k})$, which follows from the first result and \eqref{inftrans}. 
 
\bbox

 This lemma implies the following result.
\begin{theorem} \tn{(Conjugation symmetry)}
    For all $\lambda \in \mathcal{R}_k,$ $\pi(\lambda)=\pi(\lambda^{\omega_k}).$
\end{theorem}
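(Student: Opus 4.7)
The plan is to deduce the stationarity of the ``conjugated distribution'' from Lemma \ref{conj} and then invoke uniqueness of the stationary distribution. The key observation is that $k$-conjugation is an involution on $\mathcal{R}_k$: this is visible from the TASEP description, where the paper already notes that $\alpha^{-1}(\lambda^{\omega_k})$ is obtained by reversing the non-$(k+1)$ entries of $\alpha^{-1}(\lambda)$, and reversal is clearly an involution on $S_k$.

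Concretely, I would define $\tilde{\pi}(\lambda) := \pi(\lambda^{\omega_k})$ for $\lambda \in \mathcal{R}_k$. Since $\omega_k$ is an involutive bijection on $\mathcal{R}_k$, $\tilde{\pi}$ is a probability distribution on $\mathcal{R}_k$. To check that $\tilde{\pi}$ is stationary, compute
\[
\sum_{\mu \in \mathcal{R}_k} \tilde{\pi}(\mu)\, P(\mu, \lambda)
= \sum_{\mu \in \mathcal{R}_k} \pi(\mu^{\omega_k})\, P(\mu, \lambda).
\]
Re-index by $\nu = \mu^{\omega_k}$ (bijective on $\mathcal{R}_k$) to get $\sum_{\nu} \pi(\nu)\, P(\nu^{\omega_k}, \lambda)$, and then apply Lemma \ref{conj} in the form $P(\nu^{\omega_k}, \lambda) = P((\nu^{\omega_k})^{\omega_k}, \lambda^{\omega_k}) = P(\nu, \lambda^{\omega_k})$. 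This yields
\[
\sum_{\nu \in \mathcal{R}_k} \pi(\nu)\, P(\nu, \lambda^{\omega_k}) = \pi(\lambda^{\omega_k}) = \tilde{\pi}(\lambda),
\]
where the middle equality uses that $\pi$ is the stationary distribution of the finite $k$-chain.

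Finally, I would invoke irreducibility of the finite $k$-chain on $\mathcal{R}_k$ (so that the stationary distribution is unique) to conclude $\tilde{\pi} = \pi$, which is exactly $\pi(\lambda) = \pi(\lambda^{\omega_k})$. Irreducibility is essentially automatic from the TASEP picture: from any state one can reach the empty partition and back, since the chain visits every cyclic permutation in $S_k$ (this might be worth a one-sentence remark, pointing to Figure \ref{fig:k=3} and the general structure of the TASEP in Section \ref{sec:TASEP}).

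The main obstacle is really bookkeeping: making sure $\omega_k$ restricts to a bijection $\mathcal{R}_k \to \mathcal{R}_k$ and that Lemma \ref{conj} applies to transitions of the \emph{finite} chain, not only the infinite one. For the latter, note that the projected transition rates in Section \ref{sec:rectangle} are built from the same $d_\lambda^{(k)}$ ratios (possibly after adjoining a $k$-rectangle $\square_i$), and since $(\lambda \cup \square_i)^{\omega_k} = \lambda^{\omega_k} \cup \square_{k-i+1}$ by the definition of $\omega_k$ via $\mathfrak{c}$ and ordinary conjugation, the rectangle-removal step commutes with $\omega_k$; combined with the dimension equality in Lemma \ref{conj} this gives $P(\lambda,\mu) = P(\lambda^{\omega_k}, \mu^{\omega_k})$ in $\mathcal{R}_k$ as well, and the argument above goes through verbatim.
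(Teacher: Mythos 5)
Your proposal is correct and follows essentially the same route as the paper: both define the conjugated object (you conjugate the distribution, the paper conjugates the transition matrix $P'(\lambda,\mu)=P(\lambda^{\omega_k},\mu^{\omega_k})$), apply Lemma \ref{conj} together with the involutive property of $\omega_k$, and conclude by uniqueness of the stationary distribution. Your extra bookkeeping about irreducibility and about $\omega_k$ commuting with rectangle removal is sound and only makes explicit what the paper leaves implicit.
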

\pf Let $P$ denote the transition matrix of the Markov chain over $\mathcal{R}_k$ for a fixed $k\geq 2.$ We define $P'$ to be the permutation of the entries of $M_k$ according to $k$-conjugation, that is to say,
\[P'(\lambda,\mu)=P(\lambda^{\omega_k}, \mu^{\omega_k}).\]
Since $k$-conjugation is an involutive operation, i.e., $\left(\lambda^{\omega_k}\right)^{\omega_k}=\lambda,$ we have $P'=P$ by Lemma \ref{conj}. Therefore, they have the same stationary distribution, which proves the theorem. 

\bbox

We now list a number of conjectures on $\pi$ for this finite $k$-chain, based on our computations for $k\le 6$. Let $M_k=\prod_{j=1}^k\binom{2j}{j}$.
For a $\lambda\in\mathcal{R}_k$, let $\lambda^{\textnormal{comp}}\in\mathcal{R}_k$ be such that 
$\lambda \cup \lambda^{\textnormal{comp}} = (k-1,(k-2)^2,\ldots,i^{k-i},\ldots,2^{k-2},1^{k-1})$, that is $l_i(\lambda)+l_i(\lambda^{\textnormal{comp}})=k-i$.
 \begin{conjecture} \label{conj:complement}\textnormal{(Symmetry of complements)}
   For all $\lambda\in\mathcal{R}_k$, $\pi(\lambda)=\pi(\lambda^{\textnormal{comp}}).$
\end{conjecture}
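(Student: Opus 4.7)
The plan is to recast the conjecture through the TASEP bijection $\alpha$, under which (as a short induction on the recursive description of $\alpha^{-1}$ shows) complementation $\lambda \mapsto \lambda^{\textnormal{comp}}$ corresponds to reversing the cyclic permutation $\alpha^{-1}(\lambda)$ while keeping $k+1$ fixed at the end. The statement then becomes: the stationary measure of this labeled cyclic TASEP is invariant under reversal of the word.

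A naive approach mimicking Lemma~\ref{conj} would be to show that $P$ commutes with complementation. This already fails for $k=3$: from Figure~\ref{fig:k=3}, the complementary states $B=(2,1)$ and $E=(1)$ have outgoing distributions $\{1/4,3/4\}$ supported on $\{\emptyset,(2,1,1)\}$ and $\{1/2,1/2\}$ supported on $\{(2),(1,1)\}$ respectively. Direct computation in small cases does, however, strongly suggest the subtler identity
\[
P^*(\lambda,\mu) \;=\; P(\lambda^{\textnormal{comp}},\mu^{\textnormal{comp}}),
\]
where $P^*(\lambda,\mu):=\pi(\mu)P(\mu,\lambda)/\pi(\lambda)$ is the time-reversed chain, i.e.\ that complementation conjugates $P$ to $P^*$. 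Granted this, $\tilde\pi(\lambda):=\pi(\lambda^{\textnormal{comp}})$ satisfies $\tilde\pi P = \tilde\pi$ after the $\pi$-factors telescope upon summing over $\lambda$, and uniqueness of the stationary distribution forces $\tilde\pi=\pi$, which is the conjecture.

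Multiplying the displayed identity around any closed walk $\lambda_1\to\lambda_2\to\cdots\to\lambda_n\to\lambda_1$ eliminates $\pi$ entirely and yields the Kolmogorov-type cycle identity
\[
\prod_{i=1}^{n} P(\lambda_i, \lambda_{i+1}) \;=\; \prod_{i=1}^{n} P(\lambda_{i+1}^{\textnormal{comp}}, \lambda_{i}^{\textnormal{comp}}),
\]
purely in terms of the ratios $d^{(k)}_{\mu}/d^{(k)}_\lambda$. I would try to prove this cycle identity using the raising-operator machinery of the Appendix together with the $k$-rectangle identifications of Section~\ref{sec:rectangle}; it suffices to verify it on a generating set of cycles, for instance the commuting squares of the chain. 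This reformulation removes the circularity and turns the problem into a purely combinatorial identity among $k$-dimensions.

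The main obstacle is that $|\lambda|$ and $|\lambda^{\textnormal{comp}}|$ are generically unequal, so there is no hope for a local strong-marked-tableau bijection matching the two sides factor-by-factor; the cancellation must be global and invoke rectangle removal essentially. As a parallel plan, the denominator pattern $M_k=\prod_{j=1}^k\binom{2j}{j}$ in Conjectures~\ref{conj:lcd}--\ref{conj:minimum} strongly hints at a multi-line-queue-type formula for $\pi$ analogous to Ferrari--Martin~\cite{FM07}; producing such a formula would likely yield the complement symmetry for free from reversing the queues, and would simultaneously illuminate the denominator structure. Absent either route, settling this conjecture appears to require a genuinely new structural input into the $k$-Schur toolbox.
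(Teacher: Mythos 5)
This statement is presented in the paper only as a conjecture, supported by computer calculations for $k\le 6$; the paper contains no proof to compare against, so your proposal must be judged on its own terms, and on those terms it is not a proof. The genuine gap is that your central identity
\[
\frac{\pi(\mu)P(\mu,\lambda)}{\pi(\lambda)} \;=\; P(\lambda^{\textnormal{comp}},\mu^{\textnormal{comp}}),
\]
equivalently the Kolmogorov-type cycle identity, is itself only asserted on the basis of small cases and is never established. Everything downstream is conditional: granting the intertwining with the time-reversed chain, your computation showing that $\pi\circ\textnormal{comp}$ is stationary is correct, and uniqueness of the stationary distribution for this irreducible finite chain would finish the argument; but the load-bearing step is exactly as conjectural as the statement you set out to prove. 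You give no mechanism --- bijective, via strong marked tableaux, or via the raising operators of the Appendix --- for why a product of ratios $d^{(k)}_{\mu}/d^{(k)}_{\lambda}$ around a cycle should equal the corresponding product over reversed complements, and you yourself note that $|\lambda|\ne|\lambda^{\textnormal{comp}}|$ in general, which rules out any factor-by-factor matching. The multi-line-queue route is likewise only a hope; the paper explicitly states that no analogue of the Ferrari--Martin construction is known here.

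That said, your preparatory observations are sound and constitute a genuine sharpening of the conjecture. The identification of $\lambda\mapsto\lambda^{\textnormal{comp}}$ with reversal of the word $\alpha^{-1}(\lambda)$ (fixing $k+1$) is correct and consistent with the $k=4$ table in the paper. The observation that the naive intertwining $P(\lambda,\mu)=P(\lambda^{\textnormal{comp}},\mu^{\textnormal{comp}})$ already fails for $k=3$ is correct: from Figure \ref{fig:k=3}, $(2,1)$ moves to $\emptyset$ and $(2,1,1)$ with rates $\frac14,\frac34$, while its complement $(1)$ moves to $(2)$ and $(1,1)$ with rates $\frac12,\frac12$. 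One can further check from that figure that your time-reversal identity does hold on every edge when $k=3$ (e.g.\ $\pi((2,1))P((2,1),\emptyset)/\pi(\emptyset)=\tfrac13=P((2,1,1),(1))$). So you have reformulated the conjecture as a stronger, $\pi$-free cycle identity that is a plausible line of attack and is worth pursuing, but you have not closed it.
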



Table with stationary distribution (multiplied with common denominator 280), exhibiting the intriguing symmetry of complements in Conjecture \ref{conj:complement}.
\begin{center}
\scalebox{0.9}{
\begin{tabular}{@{}llr@{\hskip 1cm}lll@{}}
\hline
$\alpha^{-1}(\lambda)$ & $\lambda$ & $\pi(\lambda)\cdot 280$ & $\alpha^{-1}(\lambda^{\textnormal{comp}})$ & $\lambda^{\textnormal{comp}}$ \\
\hline
1234 & $\emptyset$           & 8  & 4321 & 3 2 2 1 1 1 \\
2341 & 1           & 14 & 1432 & 3 2 2 1 1   \\
2314 & 1 1         & 12 & 4132 & 3 2 2 1     \\
2134 & 1 1 1       & 8  & 4312 & 3 2 2       \\
3412 & 2           & 12 & 2143 & 3 2 1 1 1   \\
3142 & 2 1         & 16 & 2413 & 3 2 1 1     \\
1342 & 2 1 1       & 15 & 2431 & 3 2 1       \\
3421 & 2 1 1 1     & 12 & 1243 & 3 2         \\
3124 & 2 2         & 8  & 4213 & 3 1 1 1     \\
1324 & 2 2 1       & 12 & 4231 & 3 1 1       \\
3241 & 2 2 1 1     & 15 & 1423 & 3 1         \\
3214 & 2 2 1 1 1   & 8  & 4123 & 3           \\
\hline
\end{tabular}}
\end{center}

\begin{conjecture}\label{conj:lcd} For every $\lambda \in \mathcal{R}_k$, we have 
$\pi(\lambda)=A_\lambda/M_k$, for some integer $A_\lambda$.
\end{conjecture}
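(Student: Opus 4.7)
The plan is to apply the Matrix-Tree Theorem for Markov chains, combined with a careful analysis of the denominators arising from the $k$-rectangle property. By the Markov Chain Tree Theorem,
\[
\pi(\lambda) \;=\; \frac{\tau(\lambda)}{\sum_{\mu\in\mathcal{R}_k}\tau(\mu)},
\qquad
\tau(\lambda)\;=\;\sum_{T\in\mathcal{T}_\lambda}\prod_{(u\to v)\in T} P(u,v),
\]
where $\mathcal{T}_\lambda$ denotes the set of spanning in-arborescences of the transition graph rooted at $\lambda$. The conjecture reduces to showing that the denominator of $\tau(\lambda)/\sum_\mu\tau(\mu)$, written in lowest terms, divides $M_k$ for every $\lambda\in\mathcal{R}_k$.

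To attack this, I would first factor each transition probability as $P(u,v)=d^{(k)}_{\Lambda(u,v)}/((|u|+1)d^{(k)}_u)$, where $\Lambda(u,v)$ is either $v$ itself or $v\cup\square_i$ (in the case a rectangle was removed). Using the $k$-rectangle property $s^{(k)}_{v\cup\square_i}=s^{(k)}_v\,s_{\square_i}$ together with the coefficient extraction $[x_1\cdots x_N]$ of a product of symmetric functions, one obtains
\[
d^{(k)}_{v\cup\square_i} \;=\; \binom{|v|+i(k+1-i)}{|v|}\,d^{(k)}_v\,d_{\square_i},
\]
where $d_{\square_i}$ is the SYT count of the rectangle $i^{k+1-i}$, computed by the hook-length formula. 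Substituting into each tree-product and tracking multiplicities of each $d^{(k)}_\mu$ (every non-root $\mu$ contributes $1/d^{(k)}_\mu$ exactly once, while every numerator $d^{(k)}_{\Lambda(u,v)}$ splits as $d^{(k)}_v$ times a binomial-rectangular correction), one rewrites $\tau(\lambda)$ as a combinatorial sum over pairs (in-tree, choice of added box) with explicit numerator factors consisting of binomials and rectangular dimensions $d_{\square_i}$, and denominator factors $\prod_{u}(|u|+1)$. After dividing by $Z=\sum_\mu\tau(\mu)$ the dangling $d^{(k)}_\mu$ factors at leaves and at the root should cancel between numerator and denominator, and one would aim to identify the remaining denominator with a divisor of $M_k$, in which the $d_{\square_i}$-contributions look like the natural origin of the product $\prod_{j=1}^k\binom{2j}{j}$.

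The main obstacle, and the reason this appears only as a conjecture, is controlling the $\prod_u 1/(|u|+1)$ prefactors. Since $|\lambda|+1$ for $\lambda\in\mathcal{R}_k$ can reach $\binom{k}{2}+1$, primes such as $11$ for $k=5$ or $13$ for $k=6$ appear in the denominators of individual in-trees while being entirely absent from $M_k$; they must therefore cancel globally in the sum over spanning in-arborescences and after division by $Z$. Such cancellations are typically combinatorial in origin, and the cleanest route to the conjecture would be a multi-line queue style construction in the spirit of Ferrari--Martin \cite{FM07}: a set of $M_k$ objects, each mapping to some $\alpha^{-1}(\lambda)\in S_k$, so that $A_\lambda$ literally counts the fibre over $\lambda$ (this would also immediately refine Conjecture \ref{conj:complement} and the stronger Conjecture \ref{conj:minimum}). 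Absent such a model, a direct algebraic proof would require prime-by-prime cancellations, which seems tractable only through deeper identities among the strong marked tableaux of Lemma \ref{conj} together with the weak Kostka recursion \eqref{w_kostka}; finding those identities is, in my view, the crux of the problem.
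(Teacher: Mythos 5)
This statement is one of the paper's open conjectures: the authors offer no proof, only the computed lowest common denominators $20,280,70560,310464$ for $k=3,\dots,6$ and the conjectured minimum value, from which $M_k$ was guessed. So there is no ``paper proof'' to compare against, and your proposal, by its own admission, is a research plan rather than a proof. Judged as a proof attempt it has a genuine gap: the entire content of the conjecture is the global cancellation you defer to the end, and no mechanism for it is supplied. Two specific soft spots. First, the claim that ``the dangling $d^{(k)}_\mu$ factors at leaves and at the root should cancel'' is not automatic: in an in-arborescence the in-degrees of the vertices vary, so the product $\prod_{u\neq\lambda} d^{(k)}_{v_u}\big/\prod_{u\neq\lambda}d^{(k)}_u$ does not telescope, and a leaf's $d^{(k)}$ survives in the denominator of an individual tree term while the root's survives in the numerator; you would need to argue cancellation only after summing over trees and dividing by $Z$, which is exactly as hard as the original problem. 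Second, the prime obstruction is real but your bound is off: the maximal element of $\mathcal{R}_k$ is $(k-1,(k-2)^2,\dots,1^{k-1})$ of size $\binom{k+1}{3}$, not $\binom{k}{2}$, so for $k=5$ the factors $|u|+1$ range up to $21$ and involve the primes $11,13,17,19$, none of which divides $M_5$; nothing in the proposal explains why they disappear.

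That said, the parts of the outline that are checkable are sound: the Markov Chain Tree Theorem applies to this irreducible finite chain, each non-root vertex contributes $1/\bigl((|u|+1)d^{(k)}_u\bigr)$ exactly once, and the identity $d^{(k)}_{v\cup\square_i}=\binom{|v|+i(k+1-i)}{|v|}\,d^{(k)}_v\,d_{\square_i}$ follows correctly from the $k$-rectangle property together with $s^{(k)}_{\square_i}=s_{\square_i}$ and coefficient extraction on a product of homogeneous symmetric functions. Your closing suggestion --- that the right way to prove the conjecture is a multi-line-queue-style model of $M_k$ objects whose fibres count $A_\lambda$ --- is precisely the direction the authors themselves indicate in the introduction as desirable but missing. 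In short: a reasonable and honestly flagged plan of attack on an open problem, but not a proof, and the two concrete steps above would need repair even before confronting the main cancellation.
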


The transition rates in the finite $k$-chain can involve complicated rational numbers, but the stationary distributions all have denominators with small prime factors. The lowest common denominator for the stationary distributions are $20,280,70560, 310464$ for $k=3,4,5,6$ respectively. From these, and the minimal value below, we have conjectured $M_k$.
\begin{conjecture} \label{conj:minimum}
The minimal value of $\pi$ in the $k$-chain is $(k+1)\prod_{j=1}^k\binom{k}{j}/M_k$.
And moreover, the minimum value is obtained exactly for those partitions $\lambda$ where, for each $i$ we have either $l_i(\lambda)=0$ or $l_i(\lambda)=i-1$. In particular the minimum is obtained exactly $2^{k-1}$ times.
\end{conjecture}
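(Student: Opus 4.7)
The plan is to combine the TASEP formulation of Section \ref{sec:TASEP} with the raising operator calculus behind Theorem \ref{Thm:rate1}. Throughout I would read the characterization of the minimum set as $l_i(\lambda)\in\{0,k-i\}$ for each $1\le i\le k-1$ (testing the statement against the $k=4$ data confirms this is the intended condition: $(1^{k-1})\in\mathcal{R}_k$ is listed as a minimum state and has $l_1=k-1$, while the literal reading $l_i\in\{0,i-1\}$ would force $l_1=0$). Under the recursive description of $\alpha^{-1}$, this condition means that at every insertion step the new particle $i$ is placed at one of the two extremal positions in the current sub-cycle, parametrizing the minimum candidates by $\{0,1\}^{k-1}$ and giving the count $2^{k-1}$.

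The first step is to show that all $2^{k-1}$ extremal states share the same stationary value. The $k$-conjugation of Lemma \ref{conj} corresponds to reversing the sequence of insertions, and on the extremal orbit this flips every coordinate of the binary $b$-vector; Conjecture \ref{conj:complement}, if assumed, flips every coordinate as well. Together these do not act transitively on the orbit, so additional identifications among the extremal states must be established. I would try to extract them from the raising-operator formula \eqref{deter} by showing that for extremal $\lambda=\bigcup_{i\in S}(i^{k-i})$ with $S\subseteq\{1,\ldots,k-1\}$, the expansion of $d_\lambda^{(k)}$ reduces to an expression depending only on a symmetric statistic of $S$ (so in particular on its size). The balance equations restricted to extremal states would then form a closed subsystem that forces $\pi(\lambda)$ to be constant on the orbit.

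The second step is to evaluate this constant. The only states of $\mathcal R_k$ that transition into $\emptyset$ are the near-rectangle states $\lambda^{(i)}=(i^{k-i},i-1)$ for $i=1,\ldots,k$, each via the rectangle $\square_i$. Using the strategy from the proof of Theorem \ref{Thm:rate1}, namely expanding $d^{(k)}$ by \eqref{deter} and applying the cancellation lemmas of the Appendix, one computes $d^{(k)}_{\square_i}=d_{\square_i}$ (by the rectangle property $s^{(k)}_{\square_i}=s_{\square_i}$) and $d^{(k)}_{\lambda^{(i)}}$ as explicit binomial products. Substituted into the balance equation $\pi(\emptyset)=\sum_{i=1}^{k}\pi(\lambda^{(i)})P(\lambda^{(i)},\emptyset)$, together with the analogous equations at the other extremal states, this yields a closed linear system whose unique normalized solution at extremal states should be $\pi_{\min}=(k+1)\prod_{j=1}^k\binom{k}{j}/M_k$.

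The third and hardest step is to prove the strict inequality $\pi(\lambda)>\pi_{\min}$ for every non-extremal $\lambda$. I would attempt a Lyapunov / potential argument: set $\Phi(\lambda)=\sum_{i=1}^{k-1}\min\{l_i(\lambda),\,k-i-l_i(\lambda)\}$, a nonnegative integer measuring distance to extremality, and aim to show that the expected drift $\ev[\Phi(\Lambda_{t+1})-\Phi(\Lambda_t)\mid \Lambda_t=\lambda]$ is strictly negative whenever $\Phi(\lambda)>0$. Combined with the balance equations this would force each non-extremal state to carry strictly more stationary mass than any extremal state. A secondary route, if the drift is not always negative, is a matrix-tree interpretation of $\pi$ in which one constructs an injection from spanning in-arborescences rooted at an extremal state into those rooted at any non-extremal neighbor. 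The main obstacle is this strict inequality off the extremal set, which appears to require either a systematic control of how the raising-operator cancellations behave under arbitrary sequences of box additions (substantially generalizing Lemma \ref{lem:comp}), or a new combinatorial model for the numerators $A_\lambda$ analogous to the multi-line queues of Ferrari--Martin.
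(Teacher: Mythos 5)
This statement is Conjecture \ref{conj:minimum}: the paper offers no proof of it, only computational evidence for $k\le 6$, so there is no ``paper's own proof'' to match and your proposal must stand entirely on its own. It does not: it is a research plan with gaps at every stage, one of which you acknowledge and one of which is an outright error. On the positive side, your observation about the statement itself is correct and worth recording --- the literal condition $l_i(\lambda)\in\{0,i-1\}$ is inconsistent with both the $k=4$ table (where $(1,1,1)$, with $l_1=3$, attains the minimum) and the claimed count $2^{k-1}$, whereas $l_i(\lambda)\in\{0,k-i\}$ (each multiplicity either zero or maximal in $\mathcal{R}_k$) matches both; your identification of the predecessors of $\emptyset$ as the states $(i^{k-i},i-1)$ is also consistent with Figure \ref{fig:k=3}.

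The concrete error is in your first step: the extremal states do \emph{not} form a closed subsystem of the balance equations. Already for $k=3$ the extremal set is $\{\emptyset,(1,1),(2),(2,1,1)\}$, and the unique transition out of $\emptyset$ goes to $(1)$, which is non-extremal; likewise the in-neighbors of extremal states are generically non-extremal. So no argument ``restricted to extremal states'' can determine $\pi$ there, and the equality of $\pi$ across the $2^{k-1}$ candidates cannot be forced by a closed linear system on that set alone --- the known symmetries (Lemma \ref{conj} and, conjecturally, the complementation of Conjecture \ref{conj:complement}) act on the extremal orbit but, as you note yourself, not transitively. The claim that $d_\lambda^{(k)}$ for extremal $\lambda$ depends only on a symmetric statistic of the support set $S$ is asserted without any computation, and the raising-operator machinery of the Appendix (Lemmas \ref{long_col1}, \ref{long_col2}, \ref{lem:comp}) is tailored to the very special situation of a full column of $1$s; nothing in the paper suggests it extends to unions of several near-maximal rectangles. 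Finally, your third step --- the strict inequality $\pi(\lambda)>\pi_{\min}$ off the extremal set --- is the entire content of the minimality claim, and you explicitly leave it open; neither the drift heuristic for $\Phi$ nor the matrix-tree injection is carried out, and there is no reason offered why the drift of $\Phi$ should be negative for this chain, whose transition rates are themselves only accessible through the unproven structure of the numbers $d_\lambda^{(k)}$. The conjecture remains open.
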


There are also more involved patterns emerging in the data, such as the following.

\begin{conjecture}
The probability that $k$ is in position $j$ in the cyclic permutation $\alpha^{-1}(\lambda)$ is the same as the probability of  $l_1(\lambda)=j-1$, that is $\lambda$ has $j-1$ 1s.
\end{conjecture}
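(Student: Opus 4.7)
The plan is to find a $\pi$-preserving involution $\tau\colon \mathcal{R}_k \to \mathcal{R}_k$ that exchanges the position of $k$ in $\alpha^{-1}(\lambda)$ with $l_1(\lambda)+1$. The natural candidate is the involution induced on $\mathcal{R}_k$ (via $\alpha$) by the label-reversal $i \mapsto k+2-i$ on cyclic permutations, followed by the rotation returning $k+1$ to the last position. Since $\alpha$ is a bijection and label-reversal is visibly an involution on cyclic permutations, $\tau$ is a well-defined involution of $\mathcal{R}_k$.

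First I would verify the easy half: under the label reversal, the cyclic distance from $1$ to $2$ going right in $\alpha^{-1}(\lambda)$ (which equals $l_1(\lambda)+1$) is sent to the cyclic distance from $\tau(1)=k+1$ to $\tau(2)=k$ going right in $\alpha^{-1}(\tau\lambda)$, which is exactly the position of $k$. This gives
\[
j(\tau\lambda) = l_1(\lambda)+1.
\]

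The hard part is proving $\pi\circ\tau = \pi$. Direct computations for $k\le 4$ strongly suggest the identity
\[
\tau \;=\; \textnormal{comp}\circ\omega_k,
\]
where $\omega_k$ is the $k$-conjugation of Definition \ref{conjugate} and $\textnormal{comp}$ is the complement from Conjecture \ref{conj:complement}. I would attempt to establish this identity combinatorially, by tracking how both sides act on the recursive description of $\alpha^{-1}$: compute $\textnormal{comp}(\omega_k(\lambda))$ via the Young diagram descriptions and verify it equals the TASEP-label-reversal of $\alpha^{-1}(\lambda)$. If the identity holds, $\pi$-invariance of $\tau$ follows by composing Lemma \ref{conj} (the proven $\pi$-invariance under $\omega_k$) with Conjecture \ref{conj:complement} (the conjectured $\pi$-invariance under $\textnormal{comp}$), thereby reducing the present conjecture to Conjecture \ref{conj:complement}. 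Alternatively, one could seek a direct proof of $\tau$-invariance of $\pi$ modelled on the strong-marked-tableaux argument used for Lemma \ref{conj}; this is the main remaining obstacle either way.

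Granting $\pi\circ\tau=\pi$, the conjecture follows by the change of variable $\lambda = \tau\mu$, using both $\pi(\tau\mu)=\pi(\mu)$ and $j(\tau\mu)=l_1(\mu)+1$:
\[
P(j(\lambda)=j_0) = \sum_{\mu\in\mathcal{R}_k}\pi(\tau\mu)\,\mathbf{1}[j(\tau\mu)=j_0] = \sum_{\mu\in\mathcal{R}_k}\pi(\mu)\,\mathbf{1}[l_1(\mu)=j_0-1] = P(l_1(\lambda)=j_0-1),
\]
which is the claimed equality of marginal probabilities.
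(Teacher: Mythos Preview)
The statement you are addressing is listed in the paper as a \emph{conjecture}, with no proof offered; it is one of the ``more involved patterns emerging in the data'' that the authors record but leave open. So there is no paper proof to compare against, and the relevant question is simply whether your proposal actually proves the statement.

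It does not. Your argument is a \emph{conditional reduction}, not a proof, and you are candid about this: the conclusion rests on $\pi\circ\tau=\pi$, which you propose to obtain either (i) by proving the identity $\tau=\textnormal{comp}\circ\omega_k$ and then invoking Conjecture~\ref{conj:complement}, or (ii) by a direct argument you do not supply. Route (i) trades one open conjecture for another (Conjecture~\ref{conj:complement} is exactly the ``symmetry of complements'' that the authors flag as unexplained), and the identity $\tau=\textnormal{comp}\circ\omega_k$ is itself only checked numerically, not proved. Route (ii) is just a wish. So the genuine gap is precisely the $\pi$-invariance of your label-reversal involution $\tau$; nothing in the paper gives you this, and the strong-tableaux argument behind Lemma~\ref{conj} does not obviously transport, since label reversal on the TASEP does not correspond to conjugating the core.

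That said, your ``easy half'' is correct and is a clean observation the paper does not make explicit: from the recursive description of $\alpha^{-1}$, the cyclic distance from $1$ to $2$ going right is indeed $l_1(\lambda)+1$, and label reversal $i\mapsto k+2-i$ followed by rotation sends this to the position of $k$. So you have shown that the two statistics in the conjecture are exchanged by a concrete involution $\tau$ on $\mathcal{R}_k$; what remains is exactly to show that $\tau$ preserves $\pi$, and at present this is equivalent in strength to the complement-symmetry conjecture you would be assuming.
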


\section{Limit shape of core partitions}\label{sect:lim}
Let us consider the Markov chain on the infinite state space of $k$-bounded partitions, whose transition rates are given in \eqref{inftrans}. For a fixed $k$, this gives us a triplet of random variables at each stage, let us call it $(B_n,K_n,R_n),$ where $B_n$ is the random $k$-bounded partition at the $n$th stage, $B_n = \mathfrak{p}(K_n)$ and $R_n$ is the partition obtained from $B_n$ by removing its $i$-rectangles for $i\in [k].$ 
\begin{definition}
   Let $C_k(n)$ be the upper-right boundary of the Young diagram of $K_n$ that is placed in the positive quadrant of $\mathbf{R}^2$ with decreasing length of parts along the $y$-axis. We call  $\mathcal{C}_{k}=\lim_{n\rightarrow \infty} C_k(n)/n$  the \textnormal{limit shape} of the Markov chain if the limit exists.
\end{definition}

In order to make inferences on the limit shape, we turn to $R_n.$ For a given \[\lambda=(\ldots,(i-1)^{k_{i-1}}, i^{k_i},\ldots) \in \mathcal{R}_k,\]
assuming that adding a box to a part of length $i-1$ gives another partition in $\mathcal{R}_k$, we denote the partition obtained by
\[\lambda^{+i}=(\ldots,(i-1)^{k_{i-1}-1}, i^{k_i+1\tn{ (mod k-i+1})},\ldots) \in \mathcal{R}_k.\]
Then, for a fixed $i,$ we define the set of partitions with $k_i=k-i$, 
\[\mathcal{R}_{k}^i:=\{\lambda \in \mathcal{R}_k : |\lambda| > |\lambda^{+i}|  \}.\]
In other words, $\mathcal{R}_{k}^i$ is the set of partitions in the state space $\mathcal{R}_k$ of the finite $k$-chain such 
that the above operation creates a rectangle of size $k-i+1 \times i,$ and it is removed.
By the Ergodic Theorem, 
the mean occurrence of the event 
$\{R_n=\lambda \tn{ and }R_{n+1}=\lambda^{+i}\}$
is $\pi(\lambda) \cdot P(\lambda,\lambda^{+i}).$ Let us define
\[\rho_i:=\sum_{\lambda \in \mathcal{R}_{k}^i} \pi(\lambda) \cdot P(\lambda,\lambda^{+i}).\]
If $L_i(n)$ denotes the number of parts of $B_n$ that have length $i$
and $E_i$ the event that an $i$-rectangle is created, then  
$n^{-1} \cdot L_i(n)=n^{-1} \cdot \{\textnormal{the number of times }E_i \tn{ occured}\}\rightarrow \rho_i.$
By Lemma \ref{conj}, $\rho_i=\rho_{k-i}.$ This implies by the limit above the following.
\begin{theorem} 
The limit shape $C_k$ is symmetric around the diagonal.
\end{theorem}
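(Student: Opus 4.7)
My approach is to promote the state-level $k$-conjugation symmetry of Lemma \ref{conj} to a distributional symmetry on the random $(k+1)$-core $K_n$ at every stage $n$, and then transfer it to the boundary curve. The key dictionary is built into Definition \ref{conjugate}: $\lambda^{\omega_k}=\mathfrak{p}(\mathfrak{c}(\lambda)')$ means that $k$-conjugation of a $k$-bounded partition corresponds, under $\mathfrak{c}$, to ordinary conjugation of the associated $(k+1)$-core. In particular the Young diagram of $\mathfrak{c}(B_n^{\omega_k})$ is the reflection of $K_n$ across the main diagonal, so the claim about $\mathcal{C}_k$ will follow once I show $K_n\stackrel{d}{=}K_n'$ for every $n$.

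The distributional symmetry $B_n\stackrel{d}{=}B_n^{\omega_k}$ I would establish by induction on $n$. The base case $n=0$ holds because $\emptyset^{\omega_k}=\emptyset$. For the step, I would invoke Lemma \ref{conj} (whose proof, via \eqref{inftrans} and $d_\lambda^{(k)}=d_{\lambda^{\omega_k}}^{(k)}$, applies to every weak cover $\mu\to_k\lambda$, not only to elements of $\cal{R}_k$) to obtain $P(\mu,\lambda)=P(\mu^{\omega_k},\lambda^{\omega_k})$, and then reindex by $\nu=\mu^{\omega_k}$ using that $\omega_k$ is an involution on $\cal{P}_k(n-1)$:
\begin{equation*}
\Pr(B_n=\lambda)=\sum_{\mu}\Pr(B_{n-1}=\mu)\,P(\mu,\lambda)=\sum_{\nu}\Pr(B_{n-1}=\nu)\,P(\nu,\lambda^{\omega_k})=\Pr(B_n=\lambda^{\omega_k}).
\end{equation*}
Applying $\mathfrak{c}$ then transfers this to $K_n\stackrel{d}{=}K_n'$.

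The final step would be to push the distributional symmetry of $K_n$ to the rescaled boundary $C_k(n)/n$. Since reflection across $y=x$ is a continuous isometry of $\mathbb{R}^2$, the symmetry of each finite-$n$ distribution passes to the almost sure limit, so $\mathcal{C}_k$ coincides with its own reflection about the diagonal. There is no real obstacle once Lemma \ref{conj} is in hand; the whole argument is a short induction followed by a continuity step. The alternative route hinted at just above the theorem, using $L_i(n)/n\to\rho_i$ together with $\rho_i=\rho_{k-i}$, would also work, but it requires making precise how the part-length densities $(\rho_1,\ldots,\rho_{k-1})$ parametrize the asymptotic shape of $\mathfrak{c}(B_n)$; this is the main technical burden that the distributional argument above sidesteps.
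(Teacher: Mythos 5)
Your argument is correct, but it is a genuinely different route from the paper's. The paper does not prove the theorem by symmetrizing the law of $B_n$ at each finite $n$; instead it passes to the finite $k!$-state chain on $\mathcal{R}_k$, applies the Ergodic Theorem to get almost-sure densities $\rho_i=\lim_n L_i(n)/n$ for the number of parts of each length $i$ in $B_n$, and then uses Lemma \ref{conj} only at the level of the stationary distribution $\pi$ and the finite-chain rates to conclude $\rho_i=\rho_{k+1-i}$, whence the symmetry of the (piecewise-linear) limiting core boundary. Your induction is sound: Lemma \ref{conj} does hold for all of $\mathcal{P}_k$ (its proof is about strong marked tableaux of general cores), $\omega_k$ is an involution preserving weak covers, and in fact your induction amounts to showing that the law of $B_n$, namely the $k$-Plancherel measure \eqref{k-plancherel}, is $\omega_k$-invariant because both $w^{(k)}_\lambda$ (via \eqref{w_kostka}) and $d^{(k)}_\lambda$ are; the identity $\mathfrak{c}(\lambda^{\omega_k})=\mathfrak{c}(\lambda)'$ then gives $K_n\stackrel{d}{=}K_n'$ exactly. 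What your route buys is brevity and exactness at every finite $n$, with no ergodic theory. What the paper's route buys is the quantitative output $\rho_i$, which is reused for Conjectures \ref{conj:rho} and \ref{conj:limitshape}, and, more importantly, the fact that the relevant limits are deterministic constants. Your final step needs exactly that: from $K_n\stackrel{d}{=}K_n'$ and convergence you only get that $\mathcal{C}_k$ has the same law as its reflection; to conclude $\mathcal{C}_k$ \emph{equals} its reflection you must assume (as the paper implicitly does in its definition of limit shape) that the limit is a single deterministic curve. You should state that hypothesis explicitly rather than saying the symmetry ``passes to the almost sure limit''.
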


Let $D_{k}$ be the piecewise linear curve with vertices 
$v_i=\gamma(\binom{i}{2},\binom{k-i+1}{2})$, for $i=1\dots k$, where $\gamma
$ is a scaling constant,
see Figure \ref{F:corelimit}.

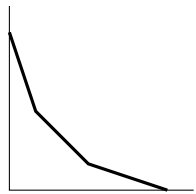
\begin{figure}[h]
\begin{center}
\begin{tikzpicture}[scale=0.7,rotate=90]
\draw[very thick] (0,0.5)--(0.5,2)--(1.5,3)--(3,3.5); 
\draw[thin](0,0)--(0,3.5)--(3.5,3.5);
\end{tikzpicture}
\end{center}
\caption{The limiting piecewise-linear curve $D_4$ for random 4-cores.}
\label{F:corelimit}
\end{figure}

In fact, data for $1\le i\le k\le 6$ show that a stronger statement might hold.
\begin{conjecture}\label{conj:rho}
    $\rho_i=\frac{1}{\binom{k+2}{3}}$ for all $i \in [k].$
\end{conjecture}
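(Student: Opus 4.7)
The plan is to combine ergodic balance equations with the symmetry of Lemma~\ref{conj}. First I would establish $\sum_{i=1}^k i(k-i+1)\rho_i = 1$; then, since $\sum_{i=1}^k i(k-i+1) = \binom{k+2}{3}$ by the hockey-stick identity, the conjecture will reduce to the uniformity $\rho_1=\cdots=\rho_k$.

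For the conservation: $R_n \in \mathcal{R}_k$ is bounded, so $|R_n|$ has zero long-run mean change per step. Each step adds one box, and each creation of an $i$-rectangle simultaneously removes $i(k-i+1)$ boxes, yielding $1 = \sum_i i(k-i+1)\rho_i$ by the Ergodic Theorem. Refining this by applying the same principle to each $l_j(R_n)$ --- and tracking the four contributions that can change $l_j$ (add a box in column $j$ or in column $j{+}1$, each either creating a rectangle or not) --- gives
\[
r_j - r_{j+1} = (k-j+1)\rho_j \quad (j<k),\qquad r_k = \rho_k,
\]
where $r_j$ is the long-run rate at which particle $j$ jumps in the TASEP formulation. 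Summing over $j$ reproduces the size balance, so these relations together with $\sum_j r_j = 1$ leave one free parameter among the $\rho_i$.

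Lemma~\ref{conj} says the $k$-conjugation $\omega_k$ is an automorphism of the Markov chain. Tracking its action on weak covers --- equivalently, on the residues labelling ``add a box in column $i$'' --- yields the pairing $\rho_i = \rho_{k-i}$ cited in Section~\ref{sect:lim}. Combined with the balance relations this still leaves roughly $k/2$ undetermined degrees of freedom, so is insufficient on its own for $k \ge 3$.

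The main obstacle is therefore establishing the full uniformity. The cleanest route goes through Conjecture~\ref{conj:complement}: viewing the complement involution as ``reverse the cyclic TASEP and relabel $j \mapsto k+2-j$'' sends an $i$-rectangle-creating transition to a $(k{+}1{-}i)$-rectangle-creating one, so if the complement preserves $\pi$ it gives $\rho_i = \rho_{k+1-i}$; combined with $\rho_i = \rho_{k-i}$ this forces $\rho_j = \rho_{j+1}$ for every $j$. A different and potentially weaker route, suggested by the $k=3$ data, is to prove directly that every rectangle-creating transition contributes the same stationary flow $\pi(\lambda)P(\lambda,\lambda^{+i})$; here the $k$-rectangle identity $s_\Lambda^{(k)} = s_{\square_i}s_{\lambda^{+i}}^{(k)}$ factors $d_\Lambda^{(k)}$ and reduces flow-uniformity to a quasi-reversibility statement relating $\pi(\lambda)$ and $\pi(\lambda^{+i})$. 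Either route requires a new input --- a $k$-Schur identity or a bijection on $\mathcal{R}_k$ --- beyond Lemma~\ref{conj} and ergodicity, and this is where I expect the real difficulty to lie.
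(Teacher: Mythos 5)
This statement is a conjecture; the paper offers no proof of it, only consistency checks, and your proposal is likewise (as you acknowledge) not a complete proof. The parts of your argument that are correct do match the paper's remarks: the conservation law $\sum_{i=1}^k i(k+1-i)\rho_i=1$ (each step adds one box, each $i$-rectangle removes $i(k+1-i)$ of them, and $R_n$ stays bounded) together with $\sum_{i=1}^k i(k+1-i)=\binom{k+2}{3}$ is exactly the paper's observation that the total area of the possible rectangles is $\binom{k+2}{3}$, your current relations $r_j-r_{j+1}=(k-j+1)\rho_j$, $r_k=\rho_k$ are a correct refinement, and reducing the conjecture to the uniformity $\rho_1=\cdots=\rho_k$ is also how the paper frames it.

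The genuine gap is in your symmetry bookkeeping, and it is fatal to your ``clean route''. The $k$-conjugation sends the rectangle $\square_i=(i^{k+1-i})$ to its conjugate $((k+1-i)^{i})=\square_{k+1-i}$, so Lemma \ref{conj} gives $\rho_i=\rho_{k+1-i}$, not $\rho_i=\rho_{k-i}$ (this is the relation the paper states right after Conjecture \ref{conj:rho}; note that a pairing $i\leftrightarrow k-i$ would leave $\rho_k$ matched with the undefined $\rho_0$). Meanwhile your reading of the complement as ``reverse the cyclic word and relabel $j\mapsto k+2-j$'' sends an $i$-rectangle event ($i$ passing $i+1$) to a $(k+1-i)$-rectangle event --- the very same involution on indices. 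So the two symmetries yield one and the same relation $\rho_i=\rho_{k+1-i}$, and combining them forces nothing beyond it; in particular they do not give $\rho_j=\rho_{j+1}$. There is a further soft spot: Conjecture \ref{conj:complement} asserts only $\pi(\lambda)=\pi(\lambda^{\textnormal{comp}})$, not that complementation intertwines the transition kernel, so even the relation $\rho_i=\rho_{k+1-i}$ would not follow from it without an additional argument about the flows $\pi(\lambda)P(\lambda,\lambda^{+i})$. The paper's own partial step is different and worth comparing against: Theorem \ref{Thm:rate1} shows $P(\lambda,\lambda^{+1})=1/k$ uniformly over $\lambda\in\mathcal{R}_k^1$, whence $\rho_1=\frac{1}{k}\sum_{\lambda\in\mathcal{R}_k^1}\pi(\lambda)$, reducing the case $i=1$ (and $i=k$ by conjugation) to the stationary probability that $1$ sits directly to the right of $2$ in the TASEP; for $1<i<k$ no such uniform rate is available, which is where the conjecture genuinely remains open.
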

\begin{conjecture}\label{conj:limitshape}
    The limit shape $\mathcal C_k$ is $D_{k+1}$.
\end{conjecture}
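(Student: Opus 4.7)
The plan is to assume Conjecture \ref{conj:rho} and combine it with a recursive description of $\mathfrak{c}$ to identify the limit shape. Conjecture \ref{conj:rho} states $\rho_i = 1/\binom{k+2}{3}$ for all $i\in[k]$, and since each creation of an $i$-rectangle in the finite chain adds $k+1-i$ parts of length $i$ to $B_n$ (while the current state $R_n\in\mathcal{R}_k$ contributes only $O(1)$), the ergodic theorem applied to the finite chain on $\mathcal{R}_k$ yields $l_i(B_n)/n \to (k+1-i)/\binom{k+2}{3}$ almost surely, where $l_i(B_n)$ is the number of parts of length $i$ in $B_n$.

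The key structural input will be a recursion for $\mathfrak{c}$. Write $\lambda_j$ for the length of row $j$ in $B_n$ and $\kappa_j$ for the length of row $j$ in $K_n = \mathfrak{c}(B_n)$, and set $a_j = \kappa_j - \lambda_j$, the number of cells added in row $j$. The algorithm in Section \ref{sec:core} picks $a_j$ as the minimum nonnegative integer such that the leftmost original cell in row $j$, now at column $a_j+1$ in $K_n$, has hook at most $k$. That hook equals $\lambda_j + |\{j' > j : \kappa_{j'} \ge a_j+1\}|$, and since $\kappa_{j'}$ is non-increasing in $j'$, minimality of $a_j$ forces the recursion
\[
\kappa_j = \lambda_j + \kappa_{j + k - \lambda_j + 1}
\]
with the convention $\kappa_{j'} := 0$ above the top row.

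Applied within the block of rows of length $i$ in $B_n$, the recursion simplifies to $\kappa_j = i + \kappa_{j + k - i + 1}$, so $\kappa$ increases by exactly $i$ every $k-i+1$ rows as one moves downward through the block. Hence the block contributes a boundary segment of slope $-(k+1-i)/i$, height $l_i(B_n)$, and width $l_i(B_n)\cdot i/(k+1-i)$. Scaling by $n$ and summing over $i=1,\ldots,k$, the vertices of the scaled boundary land at $v_j = \gamma(\binom{j}{2}, \binom{k+2-j}{2})$ for $j=1,\ldots,k+1$ with $\gamma = 1/\binom{k+2}{3}$, which are precisely the vertices of $D_{k+1}$.

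The hardest part will be Conjecture \ref{conj:rho} itself, which appears to require independent techniques such as a detailed analysis of the stationary distribution via the raising operators from the Appendix. Granted that, what remains is bookkeeping: controlling the $O(1)$ effects at the $k$ block transitions where the recursion crosses between blocks of differing $\lambda_j$, and upgrading the pointwise convergence $\kappa_{\lfloor yn\rfloor}/n \to x(y)$ to convergence of $C_k(n)/n \to D_{k+1}$ in the Hausdorff metric via monotonicity of $\kappa_j$ in $j$.
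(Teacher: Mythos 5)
The statement you are proving is a \emph{conjecture} in the paper: the authors give no proof, only the one-line remark that Conjecture \ref{conj:rho} would imply it (``since it implies that in the limit there will be equally many $i$-rectangles for every $i$''). Your proposal is exactly that conditional implication, worked out in more detail than the paper provides. The reduction itself looks sound: the recursion $\kappa_j=\lambda_j+\kappa_{j+k-\lambda_j+1}$ for $\mathfrak{c}$ is correct (the hook of the leftmost original cell of row $j$ at column $a_j+1$ is $\lambda_j+|\{j'>j:\kappa_{j'}\ge a_j+1\}|$, and minimality over non-increasing $\kappa$ gives $a_j=\kappa_{j+k-\lambda_j+1}$); the resulting slope $-(k+1-i)/i$ on the block of rows of length $i$, together with $l_i(B_n)/n\to(k+1-i)/\binom{k+2}{3}$, does produce the vertices $\gamma\bigl(\binom{j}{2},\binom{k+2-j}{2}\bigr)$ of $D_{k+1}$, and the areas sum correctly since $\sum_i i(k+1-i)=\binom{k+2}{3}$. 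You are also right to insert the factor $k+1-i$ converting rectangle-creation counts into row counts; the paper's displayed limit $n^{-1}L_i(n)\to\rho_i$ elides this factor, and without it one would not get $D_{k+1}$.

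The genuine gap is that the entire argument rests on Conjecture \ref{conj:rho}, which is itself open; you acknowledge this, but it means you have not proved the statement, only reproduced (and usefully elaborated) the paper's claimed implication Conjecture \ref{conj:rho} $\Rightarrow$ Conjecture \ref{conj:limitshape}. The paper offers no route to $\rho_i=1/\binom{k+2}{3}$ beyond numerical evidence for $k\le 6$ and the partial observations that $\rho_i=\rho_{k+1-i}$ (Lemma \ref{conj}) and that the $i=1$ case would follow from knowing the stationary probability that $1$ sits directly to the right of $2$ in the TASEP (via Theorem \ref{Thm:rate1}); the raising-operator machinery of the Appendix computes individual $d_\lambda^{(k)}$ but gives no handle on the stationary distribution $\pi$ itself. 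A complete proof would also need to establish that the limit shape exists at all (almost-sure convergence of $C_k(n)/n$, not merely identification of the candidate), which your last paragraph gestures at but does not supply.
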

 The first conjecture will imply the second, since it implies that in the limit there will be equally many $i$-rectangles for every $i$. By the symmetry of 
 Lemma \ref{conj} we know that $\rho_i=\rho_{k+1-i}$.
 Note that the sum of the area of the possible rectangles $k\times 1, k-1\times 2,\dots,1\times k$ is $\binom{k+2}{3}$ which agrees with the conjecture. Note also that by Theorem \ref{Thm:rate1} it would suffice to know the probability at stationarity that 1 is directly to the right of 2 in the TASEP, to prove Conjecture \ref{conj:rho} in the case of $i=1$ (and $i=k$ by symmetry). For other values of $i$ is seems more complicated, but never the less true.

\section{Appendix: Raising Operators} \label{appendix}
Here we list some lemmas on raising operators which were used for both results and computations. We defined the raising operator $R_{ij}$  in \eqref{raising}. Recall the product of operators $\mathbf{R}^\triangle_t$ defined in \eqref{r_triangle}. Expanding this product several terms will cancel. Recall that $R_{i,j}$ is moving a box from part $j$ to part $i$, which is also the effect of $R_{i,k}R_{k,j}$ for $i<k<j$. They will have opposite signs and therefore cancel each other.

We then recall that an inverse $i,j$ in a permutation $\pi$ is a pair $i<j$ such that $j$ comes before $i$ in one line notation of $\pi$ or equivalently $\pi^{-1}(j)<\pi^{-1}(i)$. Let $Inv(\pi)$ be the set of all inversions of $\pi$.

\begin{theorem}\label{thm:inversions} For every $t\ge 2$,
    \[\mathbf{R}^\triangle_t=\sum_{\pi\in S_{t}} (-1)^{|Inv(\pi)|}\prod_{(i,j)\in Inv(\pi)} R_{i,j}
    \]
\end{theorem}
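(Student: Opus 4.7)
The plan is to reduce the operator identity to a Laurent-polynomial identity in $t$ formal variables, and then recognize the result as a classical Vandermonde expansion. Formally, for any subset $X\subseteq\{(i,j):1\le i<j\le t\}$, the operator $R_X=\prod_{(i,j)\in X}R_{i,j}$ acts on $h_\lambda$ by $h_\lambda\mapsto h_{\lambda+\delta_X}$, where $\delta_X=\sum_{(i,j)\in X}(e_i-e_j)$. Substituting $R_{i,j}\mapsto x_i/x_j$ sends $R_X$ to the Laurent monomial $x^{\delta_X}$, so two formal $\mathbb{Z}$-linear combinations in the $R_{i,j}$'s act identically on $\{h_\lambda\}$ if and only if the corresponding Laurent polynomials in $x_1,\dots,x_t$ agree. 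Under this substitution, and dropping the harmless shift $s$, the theorem reduces to the identity
\[\prod_{1\le a<b\le t}\!\left(1-\frac{x_a}{x_b}\right)\;=\;\sum_{\pi\in S_t}(-1)^{|Inv(\pi)|}\prod_{(a,b)\in Inv(\pi)}\frac{x_a}{x_b}.\]

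The next step is to compute the Laurent monomial attached to each inversion set. For $\pi\in S_t$, the exponent of $x_a$ on the right hand side equals
$\#\{b>a:\pi^{-1}(b)<\pi^{-1}(a)\}-\#\{c<a:\pi^{-1}(c)>\pi^{-1}(a)\}.$
A short counting argument, using that $\pi^{-1}(a)-1$ counts all values lying to the left of position $\pi^{-1}(a)$ and $a-1$ counts all values smaller than $a$, collapses this difference to $\pi^{-1}(a)-a$. Reindexing the sum by $\pi\leftrightarrow\pi^{-1}$, which preserves both the sign and the inversion number, puts the right hand side in the form $\sum_{\pi\in S_t}(-1)^{|Inv(\pi)|}\prod_a x_a^{\pi(a)-a}$.

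The final step is the Vandermonde determinant: multiplying both sides by $\prod_a x_a^{a-1}$ turns the left hand side into $\prod_{a<b}(x_b-x_a)$ (since the denominator $\prod_{a<b}x_b = \prod_a x_a^{a-1}$) and the right hand side into $\sum_{\pi}(-1)^{|Inv(\pi)|}\prod_a x_a^{\pi(a)-1}=\det(x_a^{b-1})_{1\le a,b\le t}$, and the equality of these two expressions is precisely the standard Vandermonde formula.

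The main conceptual obstacle, as signalled in the paragraph preceding the theorem, is the combinatorial cancellation: the naive expansion $\prod_{a<b}(1-R_{a,b})=\sum_X(-1)^{|X|}R_X$ includes many subsets $X$ that are not inversion sets of any $\pi\in S_t$, and these have to cancel against one another because different subsets with the same $\delta_X$ produce the same operator. The Vandermonde route performs all such cancellations simultaneously inside the polynomial ring. A purely combinatorial alternative would build an explicit sign-reversing involution on non-inversion subsets, e.g.\ by finding the lex-smallest triple $i<j<k$ on which $X$ violates one of the inversion-set axioms and toggling between the configurations $\{(i,k)\}$ and $\{(i,j),(j,k)\}$, which preserves $\delta_X$ and changes $|X|$ by one.
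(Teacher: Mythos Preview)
Your proof is correct and takes a genuinely different route from the paper's.

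The paper proves the identity combinatorially: it sets up an explicit sign-reversing involution on the subsets $S\subseteq\binom{[t]}{2}$ that are \emph{not} inversion sets. The involution is built by induction on $t$; at the inductive step one locates the largest position $i$ with $(\pi(i),t+1)\in X$ but $(\pi(i+1),t+1)\notin X$, and toggles the pair $(\pi(i),\pi(i+1))$ together with swapping which of $(\pi(i),t+1),(\pi(i+1),t+1)$ lies in $X$. This yields a partner $S'$ with the same $\delta_{S'}=\delta_S$ and opposite parity, cancelling all terms except the $t!$ inversion sets.

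Your argument instead passes to the Laurent-polynomial model $R_{i,j}\mapsto x_i/x_j$, observes that the monomial attached to $Inv(\pi)$ is $\prod_a x_a^{\pi^{-1}(a)-a}$, reindexes by $\pi\mapsto\pi^{-1}$, and identifies both sides (after clearing the denominator $\prod_{a<b}x_b=\prod_a x_a^{a-1}$) with the Vandermonde determinant $\prod_{a<b}(x_b-x_a)=\det(x_a^{b-1})$. This is a clean, non-inductive, essentially one-line proof once the substitution is justified; it is exactly the kind of argument behind the Jacobi--Trudi formula itself.

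What each approach buys: yours is shorter and makes transparent \emph{why} only inversion sets survive (it is the Leibniz expansion of a determinant). The paper's construction is explicit at the level of matchings, which is philosophically closer to the later cancellation lemmas in the appendix (Lemmas on $\mathbf{R}_t^\triangle(\mu)=0$), though in fact those lemmas build their own matchings from scratch and only use the \emph{statement} of this theorem, so nothing downstream is lost by your algebraic proof. Your final paragraph sketches a bijective alternative (toggle $\{(i,k)\}\leftrightarrow\{(i,j),(j,k)\}$ at a lex-minimal violating triple); this is in the same spirit as the paper's involution but not the same rule, and would need a careful check that it is well-defined and involutive---the paper's position-based rule is a safer choice if one wants an explicit matching.
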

\begin{proof}
First not that if we let $\binom{[t]}{2}$ denote all possible pairs $(i,j), 1\le i<j\le t$, then by definition 
$\mathbf{R}^\triangle_t=\sum_{S\subseteq \binom{[t]}{2}} (-1)^{|S|}\prod_{(i,j)\in S} R_{i,j}$.
We will now use matchings as discussed above to cancel terms so that only terms corresponding to sets of inversions remain.

We will define the matchings recursively and thereby prove the statement by induction over $t$.
For the base case $t=2$ we have that there are only to subsets of $\binom{[2]}{2}$ and both $\emptyset $ and $\{(1,2)\}$ are true sets of inversions of the two permutations of length 2.

Assume now that we have defined matchings for all subsets of $\binom{[t]}{2}$ leaving only the $t!$ sets corresponding to inversions of permutations in $S_t$. 

Some subsets of $\binom{[t+1]}{2}$ will be such that the subset of pairs not containing $t+1$ have been matched away in the previous step. For those sets, we can inductively use the same matchings. Any extra pairs containing   $t+1$ will not interfere with the matching.

All other subsets $S\subseteq \binom{[t+1]}{2}$ will consist of pairs $T\subseteq \binom{[t]}{2}$ and pairs $X$ containing $t+1$, where $T$ has not been matched away. This means that there exists $\pi\in S_t$ such that $Inv(\pi)=T$.  The set  $S=T\cup X$ will form the inversions of a permutation in $S_{t+1}$ if and only if the pairs in $X$ correspond to placing $t+1$ in $\pi$, that is $X$ is empty or $X=\{(\pi(i),t+1),(\pi(i+1),t+1),\ldots,(\pi(t),t+1)\}$ for some $1\le i\le t$.
Otherwise there will be some $i$ such that $(\pi(i),t+1)\in X, (\pi(i+1),t+1)\notin X$. Let us pick the largest such $i$ to define the matching.

If $\pi(i)> \pi(i+1)$ is an inversion, that is $(\pi(i),\pi(i+1))\in T=Inv(\pi)$, the set $T'=T\setminus (\pi(i),\pi(i+1)) $ is the set of inversions of a permutation $\pi'$ where $\pi(i)$ and $\pi(i+1)$ has swapped places. Also define $X'=X\backslash \{(\pi(i),t+1)\}\cup \{(\pi(i+1),t+1)\}$. We now match $T\cup X$ with $T' \cup X'$.  Note that we remove $ (\pi(i),\pi(i+1)),(\pi(i),t+1)$ and add $(\pi(i+1),t+1)$ which is a valid matching since $\pi(i)> \pi(i+1)$. 

For example, take $\pi=3412$ in one-line notation, $t=4$ and $X=\{(54),(52)\}$. In this case $i=2,$ so $T'=T \setminus \{(41)\}$ and $X'=\{(52),(51)\}.$

If $\pi(i)< \pi(i+1)$ is not an inversion we instead form $T'$ by adding $(\pi(i),\pi(i+1)) $ and $X'=X\backslash (\pi(i),t+1)\cup (\pi(i+1),t+1)$. Again we match $T\cup X$ with $T' \cup X'$.  

The definition of $i$ as the largest position for which $(\pi(i),t+1)\in X, (\pi(i+1),t+1)\notin X$ does not change and thus these two cases form each others inverses and this is a valid matching for all sets $S$ that are not the set of inversions for some permutation. This completes the inductive step.
\end{proof}

  Next, we consider a special case of $\mathbf{R}_t^{\triangle}$ above, where $t=k-1$. For any $U\subseteq [1,k]$, we let $i_1:=\min U$, $j_1:=\min\{j\notin U:i_1<j\}$.
 Then recursively $i_r:=\min\{i\in U:j_{r-1}<i\}$, $j_r:=\min\{j\notin U:i_r<j\}$.
 We think of $i_r,j_r$ as an interval $[i_r,j_r]$ which corresponds to the product
 $R_{i_r,i_r+1}\cdots R_{i_r,j_r}$ and $U$ consisting of those intervals.
 Then we set $S(U)=\cup_{r=1}^m\{(i_r,i_r+1)\ldots,(i_{r},j_{r})\}$ to be the 
inversions within each interval that we are interested in. 

So each such $S(U)$ corresponds to a product of intervals \[R_{S(U)}:=\prod_{s=1}^m R_{i_s,i_s+1}\cdots R_{i_s,j_s}.\]

 We want to show that the specialization of Theorem \ref{thm:inversions} for $t=k-1$ gives the following sum over $2^k$ terms.
 \begin{lemma} \label{trieqk}
 \begin{equation} \label{algcancel}
 \mathbf{R}^\triangle_{k-1}=\sum_{U\subseteq [k]} (-1)^{|U|} R_{S(U)}
 \end{equation}
 
 \end{lemma}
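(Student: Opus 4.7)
The strategy is to begin from Theorem \ref{thm:inversions}, which writes $\mathbf{R}^\triangle_{k-1}=\sum_{\pi\in S_{k-1}}(-1)^{|Inv(\pi)|}R_{Inv(\pi)}$, and to further collapse this sum using the algebraic identity $R_{i,m}R_{m,j}=R_{i,j}$ for $i<m<j$ discussed in the paragraph preceding Theorem \ref{thm:inversions}. Distinct permutations may therefore produce the same operator $R$, and after gathering these identifications the goal is to match the surviving terms with the interval-decomposition indexing by $U\subseteq[k]$. A first sanity check is that $|S(U)|=\sum_r(j_r-i_r)=|U|$ by the block construction, so the sign $(-1)^{|S(U)|}$ agrees with the advertised $(-1)^{|U|}$.

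Concretely, for each subset $U$ I would identify the class of permutations whose inversion set canonicalizes, under repeated application of $R_{i,m}R_{m,j}=R_{i,j}$, to $R_{S(U)}$. We then need to verify two things: (i) the signed sum $\sum_{\pi\mapsto U}(-1)^{|Inv(\pi)|}$ equals $(-1)^{|U|}$, and (ii) every permutation in $S_{k-1}$ either lies in one of these classes or is matched with another of opposite sign. Claim (i) is a parity statement about the inversion numbers of the permutations feeding into a given $U$, which in the simplest cases (a single block) reduces to the alternating-sum identity of Lemma \ref{lem:comp}. It is convenient here to organize the computation by the multiset of second coordinates appearing in $Inv(\pi)$, so that $R_S$ depends only on the net flow $f_v=\#\{(i,j)\in S:i=v\}-\#\{(i,j)\in S:j=v\}$, and the flows produced by the sets $S(U)$ are exactly those whose negative coordinates are all $-1$.

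The main obstacle will be proving (ii) and organizing the cancellations cleanly. A natural approach is to construct a sign-reversing involution $\iota$ on permutations whose inversion set does not canonicalize to any $R_{S(U)}$: one locates the smallest position where the inversion pattern breaks the disjoint-interval block structure required of $S(U)$ and swaps the two entries of $\pi$ responsible for the deviation, as in the flavor of the matching already used in the proof of Theorem \ref{thm:inversions}. Checking that $\iota$ is indeed an involution, that it flips the parity of $|Inv(\pi)|$, and that it commutes compatibly with the identifications already imposed by $R_{i,m}R_{m,j}=R_{i,j}$, is the technical heart of the argument; once this is verified, summing the surviving signed contributions yields the claimed identity over the $2^k$ terms indexed by $U\subseteq[k]$.
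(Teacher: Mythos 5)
Your starting point (specializing Theorem \ref{thm:inversions}) is the same as the paper's, and your sanity check $|S(U)|=|U|$ is correct, but the mechanism you propose for eliminating the unwanted terms cannot work. The identity \eqref{algcancel} is \emph{not} a formal identity of raising operators: already for three equal parts, Theorem \ref{thm:inversions} produces the two extra terms $+R_{13}R_{23}$ (from $\pi=312$) and $-R_{12}R_{13}R_{23}$ (from $\pi=321$), which send $(\lambda_1,\lambda_2,\lambda_3)$ to $(\lambda_1+1,\lambda_2+1,\lambda_3-2)$ and $(\lambda_1+2,\lambda_2,\lambda_3-2)$ respectively. These are distinct operators with distinct net flows; no application of $R_{i,m}R_{m,j}=R_{i,j}$ identifies either with any other term in the sum, and their signs are opposite but they cannot cancel each other. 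So there is no sign-reversing involution doing the job, and your claim (ii) fails. The terms disappear for a different reason: the lemma is applied to a run of parts all equal to $1$, and any permutation having two inversions $(i_1,j),(i_2,j)$ with the same second coordinate would have to remove two boxes from a part of size one, producing a negative entry and hence the zero term ($h_m=0$ for $m<0$). The correct mechanism is term-by-term \emph{vanishing} on the relevant input, not pairwise cancellation. Relatedly, your net-flow invariant is the wrong one: what singles out the $S(U)$'s is that each position \emph{donates} at most one box (at most one inversion per second coordinate), not that the negative coordinates of the net flow are all $-1$.

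Once this is fixed, your claim (i) becomes trivial and Lemma \ref{lem:comp} plays no role here. The permutations with at most one inversion per second coordinate are exactly those whose inversion sets are disjoint unions of intervals $\{(i_r,i_r+1),\dots,(i_r,j_r)\}$ (if $(i,j)\in Inv(\pi)$ then every $(i,m)$ with $i<m<j$ is forced, and a second inversion ending at the same $j$ is excluded), and these inversion sets are in bijection with subsets $U\subseteq[k]$ via the runs of $U$; each $U$ corresponds to a single surviving permutation, contributing $(-1)^{|Inv(\pi)|}=(-1)^{|U|}R_{S(U)}$. This is essentially the paper's argument: it shows inductively, starting from the largest second coordinate where only one box is available, that every surviving permutation has at most one inversion per second coordinate, and identifies the surviving inversion sets with the $S(U)$. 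Lemma \ref{lem:comp} enters only afterwards, in the proof of Theorem \ref{Thm:rate1}, to evaluate the resulting alternating sum over compositions.
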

 
\begin{proof}
When $t=k-1$, we are applying the operators to $k-1$ parts of length $1$ in the partition. The specialization of Theorem \ref{thm:inversions} to this situation demands that we must only sum over permutations $\pi$ for which $Inv(\pi)$ has at most one inversion $(i,k-1)$, since there is only one box to move from position $k-1$. Now inductively this implies that there can be at most one inversion $(i,j)$ for any $j$. If $(i_1,j),(i_2,j)\in Inv(\pi)$, then we would also need $(j,\ell)\in Inv(\pi)$, because if two boxes are moved from $j$ one box has to come from other position. But this would require $\ell,j,i_2,i_1$ in that order in one line notation of $\pi$ contradicting the inductive assumption of at most 1 inversion with $\ell$.
\end{proof}
 
 Eqn. \eqref{algcancel} can be written as
 \[1- \sum_{i=1}^{n-1} R_{i,i+1} + \sum_{i=1}^{n-2} R_{i,i+1}R_{i,i+2} + \sum_{i=1}^{n-2} \sum_{j > i+1} R_{i,i+1}R_{j,j+1}-\cdots. \]

For example, if $k=2,$ we have
 \[1-R_{12}.\]
When $k=3,$
\[1-R_{12}-R_{23}+R_{12}R_{13}\]
When $k=4,$
\[1-R_{12}-R_{23}-R_{34}+R_{12}R_{13}+R_{23}R_{24}+R_{12}R_{34}-R_{12}R_{13}R_{14}. \]
When $k=5,$

\begin{equation*}
\begin{split}
& 1-R_{12}-R_{23}-R_{34}-R_{45}+R_{12}R_{13}+R_{23}R_{24}+R_{34}R_{35}+R_{12}R_{34}+R_{12}R_{45}+R_{23}R_{45} \\
&-R_{12}R_{13}R_{14}-R_{12}R_{13}R_{45}-R_{12}R_{34}R_{35}-R_{23}R_{24}R_{25}+R_{12}R_{13}R_{14}R_{15}
\end{split}
\end{equation*}

Now we apply the triangle operators to a general integer vector. We will show that the only non-trivial case is that all entries of the vector are the same is when the entries are confined to a triangle (Lemma \ref{alleq}). Let us first define
\begin{definition} \label{trunc}
    Let $\mu=(\mu_1,\ldots,\mu_r) \in  \mathbb{Z}^{r}_+.$ We call
\[\hat{\mu}=\mu-(\tn{min} \, \mu) \cdot (1,\ldots,1).\]
a truncation of a non-negative vector $\mu$. 
\end{definition}
Motivation to define such an object is that we view $\mu$ as a vector obtained after applying a set of operators to a segment of a $k$-bounded partition. Recall the definition of $R_X$ in \eqref{r_x} for any $X \subset \mathbb{Z}^2_{+},$ which will be used below.

\begin{lemma} \label{triangle-lemma}
Let $\mu \in \mathbb{Z}_+^{t+1}.$ If $\hat{\mu}=(\hat{\mu}_1,\ldots,\hat{\mu}_{t+1})=(0,\ldots,0,m)$ such that $1 \leq m \leq t,$ we have $\mathbf{R}^\triangle_{t}(\mu)=0.$ 
\end{lemma}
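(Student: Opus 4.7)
The plan is to identify $\mathbf{R}^\triangle_t h_\mu$ with a Jacobi--Trudi determinant and then invoke the standard vanishing of a determinant with two coinciding rows.

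First, I would apply Theorem~\ref{thm:inversions} (interpreting $\mathbf{R}^\triangle_t$ as acting on all $t+1$ coordinates of $\mu$, consistent with the expanded listings in the appendix for $k=2,\ldots,5$) together with the commutativity of the raising operators, which gives
\[\mathbf{R}^\triangle_t h_\mu \;=\; \sum_{\pi\in S_{t+1}} (-1)^{|Inv(\pi)|}\, h_{\mu+\delta(\pi)},\qquad \delta(\pi):=\sum_{(i,j)\in Inv(\pi)}(e_i-e_j).\]
A direct check (or a short induction on the length of $\pi$) shows $\delta(\pi)_i=\pi^{-1}(i)-i$. Relabeling the sum by the involution $\pi\mapsto\pi^{-1}$, which preserves $|Inv(\pi)|$ and hence the sign, one converts this into the Leibniz expansion
\[\mathbf{R}^\triangle_t h_\mu \;=\; \sum_{\pi\in S_{t+1}}\mathrm{sgn}(\pi)\prod_{i=1}^{t+1}h_{\mu_i+\pi(i)-i} \;=\; \det\bigl(h_{\mu_i+j-i}\bigr)_{1\le i,j\le t+1}.\]

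Next, I would invoke the standard observation that the $i$th row of this matrix, namely $(h_{(\mu_i-i)+1},h_{(\mu_i-i)+2},\ldots,h_{(\mu_i-i)+(t+1)})$, depends solely on $\mu_i-i$. Consequently, if $\mu_i-i=\mu_{i'}-i'$ for some $i\ne i'$, two rows coincide and the determinant vanishes.

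Finally, I would plug in the specific $\mu$ of the hypothesis. Writing $c:=\min\mu$, we have $\mu_i-i=c-i$ for $1\le i\le t$ and $\mu_{t+1}-(t+1)=c+m-t-1$. The equation $c-i=c+m-t-1$ has the unique solution $i^*=t+1-m$, which lies in $\{1,\ldots,t\}$ precisely when $1\le m\le t$. Under the hypothesis of the lemma, rows $i^*$ and $t+1$ of the Jacobi--Trudi matrix are identical, so the determinant---and therefore $\mathbf{R}^\triangle_t(\mu)$---vanishes. The main (and really only) delicate step is the identification in the first paragraph; I would carry it out via the involution $\pi\mapsto\pi^{-1}$ as indicated, though alternatively one could prove the identity $\prod_{1\le i<j\le t+1}(1-R_{ij})h_\mu=\det(h_{\mu_i+j-i})$ directly by induction on $t$, expanding the determinant along its last row.
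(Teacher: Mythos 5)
Your proof is correct, and it takes a genuinely different route from the paper's. The paper proves the lemma by induction on $t$, constructing an explicit sign-reversing involution on the subsets of operator indices: it partitions the index set into regions $T$, $S$, $D$ (and $M$, $K$ in the boundary case $m=t$), matches terms within $T$ by the inductive hypothesis, and transports that matching through $S$ and $D$. You instead pass through Theorem \ref{thm:inversions} to rewrite $\mathbf{R}^\triangle_t h_\mu$ as $\sum_{\pi}\mathrm{sgn}(\pi)\prod_i h_{\mu_i+\pi(i)-i}=\det(h_{\mu_i+j-i})$ --- your computation $\delta(\pi)_i=\pi^{-1}(i)-i$ and the relabeling $\pi\mapsto\pi^{-1}$ both check out, and your reading of $\mathbf{R}^\triangle_t$ as involving all pairs $1\le i<j\le t+1$ is the one the paper actually uses in this lemma (the literal definition in \eqref{r_triangle} is off by one) --- and then the hypothesis $\hat{\mu}=(0,\dots,0,m)$ with $1\le m\le t$ forces rows $t+1-m$ and $t+1$ of the Jacobi--Trudi matrix to coincide, so the determinant vanishes. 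This is shorter and more conceptual, and it cleanly explains \emph{why} the hypothesis $1\le m\le t$ is exactly the right one. What it does not deliver is the explicit term-by-term matching \eqref{match}: the paper's proof of Lemma \ref{long_col1} does not merely cite the statement of this lemma but reuses the involution built in its proof (the matching within $T$ extended over $S$ and $D$), so if your argument replaced the paper's, Lemma \ref{long_col1} would need to be reworked --- for instance by upgrading your determinant identity to a signed pairing of monomials, or by noting that the row-equality cancellation can itself be realized as an involution. As a proof of the stated lemma alone, however, your argument is complete.
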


\begin{proof} The proof is by induction over $t$. For all $t,$ we distinguish the two cases: (1) $1 \leq m \leq t-1$ and (2) $m=t$.

For $t=1,$ the only possibility is $\hat{\mu}=(0,1),$ and $\mathbf{R}^\triangle_1=\{R_{12}\}.$ Therefore we match $(1,0)$ with $(0,1)$ where we apply $R_{12}$ in the former and do not apply any non-trivial operator in the latter. Since the parities differ by one, they cancel out each other. 

Suppose $t\geq 2,$  and consider the three regions for the operators:\\
The top-left triangle is 
\[T:= \left\{ (i,j) \in \mathbb{Z}^2_{+} \, : \,  t-m+1 \leq i \leq t , i+1 \leq j \leq t+1 \right\}.\]
Then we have the bottom triangle
\[D:= \left\{ (i,j) \in \mathbb{Z}^2_{+} \, : \,  1 \leq i \leq t-m-1 , i+1 \leq j \leq t-m \right\}.\]
Finally, we have the strip 
\[S:= \left\{ (i,j) \in \mathbb{Z}^2_{+} \, : \,  1 \leq i \leq t-m , t-m+1 \leq j \leq t+1 \right\},\]
see Figure \ref{triangle}. Note that the regions above define the labels of the associated operators, not the actual regions in the figure, and that the sum of the combinations of operators in $T$ is just $\mathbf{R}_{m}^{\triangle}$ shifted by $t-m$. 

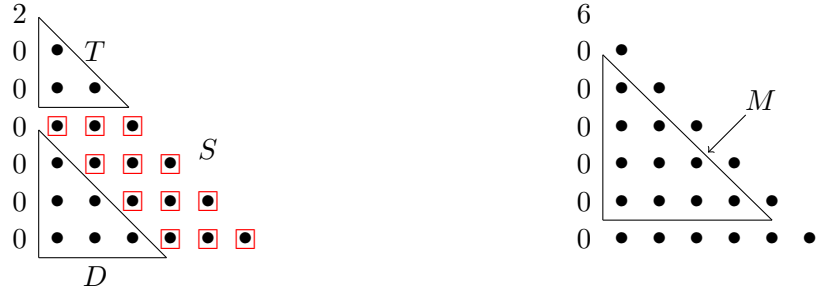
\begin{figure} 
\begin{tikzpicture}
\hspace*{1cm}

  \foreach \y in {1,1.5,...,3} {
        \draw (0, \y -1) node{$0$} ++(1,1);
    \foreach \x in {0.5,1,...,\y} {
      \draw (\x, 3-\y) node{$\bullet$} ++(1,1);
          }
  }
  
  \foreach \x in {0,0.5,1} {
    \foreach \y in {0,0.5,1,1.5} {
      \draw (2+\x-\y, \y) node{$\color{red}\square$} ++(1,1);
          }
  }

       \draw (0.5, 2.5) node{$\bullet$};
       \draw (0, 2.5) node{$0$}; 
       \draw (0, 3) node{$2$} ;
              \draw (1, 2.5) node{$T$} ;
              \draw (1, -0.5) node{$D$} ;
              \draw (2.5, 1.2) node{$S$} ;

      \draw (0.25,1.75) -- (1.45,1.75);
\draw (0.25,1.75) -- (0.25,2.95);
\draw (0.25,2.95) -- (1.45,1.75);

\draw (0.25,-0.25) -- (0.25,1.45);
\draw (0.25,-0.25) -- (1.95,-0.25);
\draw (0.25,1.45) -- (1.95,-0.25);

 \foreach \y in {1,1.5,...,3} {
        \draw (7.5, \y -1) node{$0$} ++(1,1);
    \foreach \x in {0.5,1,...,\y} {
      \draw (7.5+\x, 3-\y) node{$\bullet$} ++(1,1);
          }
  }

 \draw (8, 2.5) node{$\bullet$};
       \draw (7.5, 2.5) node{$0$}; 
       \draw (7.5, 3) node{$6$} ;
     
 \draw (7.75,0.25) -- (10,0.25);
\draw (7.75,0.25) -- (7.75,2.45);
\draw (7.75,2.45) -- (10,0.25);

              \draw (9.85, 1.85) node{$M$} ;
\draw [->] (9.65,1.65) -- (9.15,1.15);

\end{tikzpicture}
\caption{An example for Lemma \ref{triangle-lemma}. $t=6$ and $m=2$ for the figure on the left while $t=m=6$ for the figure on the right.}
\label{triangle}
\end{figure}

Starting with the first case $m \leq t-1,$ the induction hypothesis implies that there exists a matching within $T$, that is to say, for all $X \subseteq T$ there exists a unique $X' \subseteq T$ such that
\begin{equation} \label{match}
 R_X(\mu) \leftrightarrow R_{X'}(\mu) = \pi(R_X(\mu))   
\end{equation}
for some $\pi \in \mathcal{S}_{m+1}.$ Note that we can choose $X'=X$ if $R_X$ is not applicable to $\mu.$ For example, if $\mu_j=1$, $X=\{(i_1,j),(i_2,j)\}$ and $(j,s)\notin X$ for any $s$,  then $R_X$ is not applicable to $\mu$ because there are no two boxes to displace at the $j$th part of $\mu.$ See 
Theorem \ref{thm:inversions} where $2^{\binom{n}{2}}-n!$ possible choices for subsets of operators that are not applicable.

Now we consider $Y \subseteq S.$ We first partition $Y$ with respect to the first coordinates of its elements; let $Y_l =Y \cap (\{l\}\times \mathbb{Z}_+).$ Then we define $\pi(Y_l)$, for the same $\pi\in S_{m+1}$ as above, by letting 
\[\pi(Y_l)=\{(l,j) \in S \, : \, (l,\pi^{-1}(j)) \in Y_l\}.\] 
 Observe that $R_{Y_l}$ and $R_{\pi(Y_l)}$ have the same parity, and we have the matching 
    \[R_{Y_1} \circ \cdots \circ R_{Y_{m-t}} \circ R_X(\mu)  +R_{\pi(Y_1)} \circ \cdots \circ R_{\pi(Y_{m-t})}  \circ \pi(R_X(\mu))=0.\]
 Then for any $Z \subseteq D,$ we can apply $R_Z$ to both sides of the matching above and it will still be a matching. Thus we are done with the first case.

Now we look at the case $m=t.$ We consider the operators in the middle of the triangle $T,$
\[M= \left\{(i,j) \in \mathbb{Z}^2_{+} \, : \,  2 \leq i \leq t-1 , i+1 \leq j \leq t \right\},\]
see the right image in Figure \ref{triangle}.
Then we look at the operators on the sides
\begin{align*}
K:&= \left\{ (i,t+1) \in \mathbb{Z}^2_{+}\, : \, 2 \leq i \leq t  \right\} \cup \left \{ (1,j) \in \mathbb{Z}^2_{+} \, : \, 2 \leq j \leq t-1\right\} \cup \{(1,t)\} \\
&= \left\{(2,t+1),(3,t+1),\ldots, (t,t+1),(1,2),(1,3),\ldots, (1,t-1), (1,t) \right \}.
\end{align*}

For any $X \subseteq M$ and $Y \subseteq K,$ we will find some $Y' \in K$ such that
\[R_Y \circ R_X (\hat{\mu}) + R_{Y'} \circ R_X (\hat{\mu}) = 0.\]

Note that $X$ will not alter the top part nor the bottom part. The bottom part of 
$R_Y \circ R_X (\hat{\mu})$ will be equal to the number of $R_{1j}\in Y$ and the size of the top part will be $t$ minus the number of $R_{it}\in Y$. Boxes can be moved from the top part to the bottom part by  pairs 
$R_{1,i}R_{i,t+1}, 2\le i\le t-1$ and by $R_{1,t+1}$. We construct $Y'$ by interchanging all pairs $R_{1,i}R_{i,t+1}$ and also $R_{1,t+1}$ to change the parity. This gives the same partition with top and bottom parts interchanged and reverse parity.
In symbols $Y'$ is constructed as follows. For $i=2,\ldots, t,$
\begin{align*}
\tn{ If } &\left\{(i,t+1), (1, i)\right\} \subseteq Y, \tn{ then }  \left\{(i,t+1), (1, i)\right\} \cap Y'=\emptyset,\\
\tn{ while if } &\left\{(i,t+1), (1, i)\right\} \cap Y = \emptyset, \tn{ then }  \left\{(i,t+1), (1, i)\right\} \subseteq Y',\\
\tn{ otherwise } &\left\{(i,t+1), (1, i)\right\} \cap Y = \left\{(i,t+1), (1, i)\right\} \cap Y'.
\end{align*}
Finally, we take 
\[(1,t) \in Y \tn{ if and only if }(1,t) \notin Y'.\]
This last step guaranties $R_Y$ and $R_{Y'}$ have different parities. This completes the inductive step and the proof.
\end{proof}

\begin{lemma} \label{alleq}
Let $\mu \in \mathbb{Z}_+^{t+1}.$ If $  \hat{\mu}$ is non-zero and $\hat{\mu}_i \leq i-1$ for all $i$, for every additive term $R_Z$ in the expansion of $\mathbf{R}_{t}^{\triangle}(\mu)$ in \eqref{r_triangle}, there exists another term $R_{Z'}$ such that $R_{Z}(\mu)+R_{Z'}(\mu)=0.$ In particular, $\mathbf{R}_{t}^{\triangle}(\mu)=0.$ 
\end{lemma}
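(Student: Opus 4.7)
The plan is to identify $\mathbf{R}_t^\triangle$ with a Jacobi-Trudi determinant and exploit the vanishing forced by two equal rows. Each raising operator preserves the row-sum, so applying $R_Z$ to $\mu$ and to $\hat\mu$ produces vectors differing only by the constant vector $(\min\mu)\cdot(1,\dots,1)$; it therefore suffices to construct a sign-reversing involution on the terms of $\mathbf{R}_t^\triangle(\hat\mu)$. By Theorem \ref{thm:inversions}, the matching constructed in that proof already collapses the raw expansion of $\mathbf{R}_t^\triangle$ into the sum $\sum_{\pi}(-1)^{|\mathrm{Inv}(\pi)|}R_{\mathrm{Inv}(\pi)}$, and this, evaluated on $h_{\hat\mu}$, is precisely the Leibniz expansion of $\det\bigl(h_{\hat\mu_i+j-i}\bigr)_{1\le i,j\le t+1}$.

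Next I would verify that the shifted sequence $\nu_i:=\hat\mu_i-i$ must have a repeat. From $0\le\hat\mu_1\le 0$ we get $\nu_1=-1$. If all $\nu_i$ were distinct, then since $\nu_i\in[-i,-1]$ an inductive pigeonhole argument would force $\nu_i=-i$, hence $\hat\mu_i=0$, for every $i$, contradicting that $\hat\mu$ is non-zero. Hence $\nu_a=\nu_b$ for some $a<b$, so rows $a$ and $b$ of the Jacobi-Trudi matrix coincide and $\mathbf{R}_t^\triangle(h_{\hat\mu})=0$. At the level of the Leibniz sum, the involution $\pi\mapsto\pi\circ(a\,b)$ on $S_{t+1}$ reverses sign while fixing the monomial $\prod_k h_{\hat\mu_k+\pi(k)-k}$, because the swap $\pi(a)\leftrightarrow\pi(b)$ merely reorders the two factors indexed by the equal rows.

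Composing the matching of Theorem \ref{thm:inversions} with this row-swap involution produces the pairing $Z\leftrightarrow Z'$ demanded by the statement. The main delicacy I anticipate is confirming that the composition is a genuine fixed-point-free involution on all subsets $Z$ rather than merely a two-to-one map; since right multiplication by $(a\,b)$ is fixed-point-free on $S_{t+1}$ and the matching of Theorem \ref{thm:inversions} pairs non-permutation subsets bijectively, the composition is an involution, but describing the induced pairing explicitly on subsets $Z$ rather than on permutations is where most of the bookkeeping lies.
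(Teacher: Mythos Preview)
Your approach is correct and genuinely different from the paper's. The paper argues by locating the first nonzero coordinate $\hat\mu_{m+1}$, applying the previous Lemma~\ref{triangle-lemma} to the block $(0,\dots,0,\hat\mu_{m+1})$ to obtain a matching inside the top triangle $T$, and then propagating that matching through the strips $M_l$ by permuting first indices; operators with both indices beyond $m+1$ are carried along unchanged. Your route bypasses Lemma~\ref{triangle-lemma} entirely: once Theorem~\ref{thm:inversions} rewrites $\mathbf{R}^\triangle_t h_{\hat\mu}$ as the Leibniz expansion of $\det(h_{\hat\mu_i+j-i})$, the pigeonhole on $\nu_i=\hat\mu_i-i\in[-i,-1]$ forces two equal rows, and the transposition $(a\,b)$ furnishes the sign-reversing involution on permutation terms, with non-permutation subsets already matched by Theorem~\ref{thm:inversions}. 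This is cleaner and exposes the linear-algebraic reason for the vanishing. Two small points: with the paper's conventions one has $R_{\mathrm{Inv}(\pi)}(\hat\mu)_i=\hat\mu_i+\pi^{-1}(i)-i$, so the row-swap corresponds to left multiplication $\pi\mapsto(a\,b)\pi$ rather than right; and the ``bookkeeping'' you flag is not an obstacle, since Theorem~\ref{thm:inversions}'s matching pairs subsets as operators (independent of $\mu$), so the two involutions act on disjoint pieces and compose to a genuine fixed-point-free involution. One thing the paper's more explicit construction buys is that its rearrangement permutation is supported on the first $m{+}1$ coordinates, a feature invoked verbatim in the proof of Lemma~\ref{long_col1}; your transposition $(a\,b)$ serves the same purpose there, but you would need to note this when that lemma is reached.
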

\begin{proof}
    For $1 \leq m \leq t,$ let $\mu_{m+1}$ be the part with the smallest index such that $\hat{\mu}_{m+1} \neq 0.$ We consider the following regions of operators:
\[T= \left\{ (i,j) \in \mathbb{Z}^2_{+} \, : \,  1 \leq i \leq m , i+1 \leq j \leq m+1 \right\}, \]
then the middle strip
\[M= \left\{ (i,j) \in \mathbb{Z}^2_{+} \, : \,  1 \leq i \leq m+1 , m+2 \leq j \leq t+1 \right\}. \]
We further divide $M$ into diagonals as
\[M_l=\left\{ (i,l+m+1) \in \mathbb{Z}^2_{+} \, : \,  1 \leq i \leq m+1 \right\}\]
for $l=1,\ldots, t-m.$  

\begin{figure}[h!]
\begin{tikzpicture}

  \foreach \x in {0,0.5,...,4} {
    \foreach \y in {0,0.5,1,1.5} {
      \draw (\x-\y, \y) node{$\bullet$} ++(1,1);
          }
  }

 \draw (-1.3, 0.5) node{$T$} ;
              \draw (0, -0.4) node{$M_1$} ;
              \draw (0.6, -0.4) node{$M_2$} ;
              \draw (2.1, -0.4) node{$\ldots$} ;
              \draw (4.2, -0.4) node{$M_9$} ;

      \draw (-1.75,-0.15) -- (-0.3,-0.15);
\draw (-1.75,-0.15) -- (-1.75,1.45);
\draw (-1.75,1.45) -- (-0.3,-0.15);

    \foreach \y in {0,0.5,1} {
            \draw (-2, \y) node{$0$} ++(1,1);
  }

            \draw (-2, 1.5) node{$3$};

\draw (4.15,0.15) -- (2.65,1.65);
  
   \foreach \x in {0,0.5,...,3.5} {
      \draw (\x+0.35,-0.15) -- (\x-1.35,1.65);      
  }

\end{tikzpicture}
\end{figure}

\noindent First consider $m=1.$ The only non-trivial case is $\hat{\mu}=(0,1).$ Therefore we match $(1,0)$ with $(0,1)$ as in the proof of the previous lemma. Then for any $X_l \in M_l$ for all $l,$ observing that $M_l$ has two elements, we can take $X_l'=(1 2) X_l$ where $(1 2)$ is the transposition that flips the entries in $X_l$. Then we have   

\[R_{X_l}(0,1)  + R_{X_l'}(1,0)=0.\]

\noindent 
For $m\ge 2$ we have a truncated vector of the form
  $ \hat{\mu}=(0,\ldots,0,n)$ where $ \hat{\mu}$ has $m+1$ parts and $n\leq m.$ By the proof of the lemma above, for all $X\subseteq T,$ there exists $X'=\pi(X)$ for some $\pi \in \mathcal{S}_m$ such that
\[R_X( \mu)+ R_{X'}( \mu)=0.\] 
Observe that we do not rule out the possibility that $X'=\pi(X).$ 
  
Then for any $Y_l \subseteq M_l,$ arguing similar to the proof above, we have
 
  \[R_{Y_1} \circ \cdots \circ R_{Y_{m-t}} \circ R_X( \mu)  +R_{\pi(Y_1)} \circ \cdots \circ R_{\pi(Y_{m-t})} \circ R_{X'}( \mu)=0\]
as before. Since any term in $\mathbf{R}_{t}^{\triangle}(\mu)$ is of the form $R_{Y_1} \circ \cdots \circ R_{Y_{m-t}} \circ R_X,$ this completes the proof.
\end{proof}

The following lemma is an application of Lemma \ref{alleq} on partitions with a column of $1$s. 
\begin{lemma} \label{long_col1}
Let $\lambda \in \mathcal{P}_k$ such that $l_1(\lambda)=k-1,$ i.e., $\lambda_{l(\lambda)}=\cdots=\lambda_{l(\lambda)-k+2}=1$ and $\lambda_{l(\lambda)-k+1} >1.$ Then for any $X \subset \{(i,j) \in \mathbb{Z}^2 \, : \, 1 \leq i \leq l(\lambda)-k+1,i+1 \leq j \leq k-\lambda_i+i\}$ \tn{(}see the range of the product \eqref{deter} for all parts except those of length $1$\tn{)} containing an $(i,j)$ for some $j\geq l(\lambda)-k+2,$ we have $\mathbf{R}_{k-1}^{\triangle} \circ R_X(\lambda)=0.$ 
\end{lemma}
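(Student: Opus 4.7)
The plan is to recognize $\mathbf{R}_{k-1}^\triangle$ as the Jacobi--Trudi operator on a length-$(k-1)$ composition, so that applying it to the last $k-1$ positions of $R_X(\lambda)$ produces a Schur function of a composition that we then show vanishes by the standard row-swap straightening.

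First, I would analyze $R_X(\lambda)$ on the positions $l(\lambda)-k+2,\dots,l(\lambda)$ of the original length-$1$ parts. Each $(i,j)\in X$ with $j\ge l(\lambda)-k+2$ takes a box out of a $1$-part, and a given column $j$ can be hit at most once if the resulting composition is to remain nonnegative; so the restricted vector $\nu:=(R_X(\lambda)_{l(\lambda)-k+2},\dots,R_X(\lambda)_{l(\lambda)})$ lies in $\{0,1\}^{k-1}$. Crucially, $(i,l(\lambda))\in X$ would require $l(\lambda)\le k-\lambda_i+i$; combined with $i\le l(\lambda)-k+1$ this forces $\lambda_i\le 1$, contradicting $\lambda_i>1$. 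Hence $\nu_{k-1}=1$, and since by hypothesis $X$ contains at least one pair with $j\ge l(\lambda)-k+2$, the vector $\nu$ also contains at least one $0$.

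Next I would factor out the untouched positions: because $\mathbf{R}_{k-1}^\triangle$ only acts on indices $l(\lambda)-k+2,\dots,l(\lambda)$, reindexing these as $1,\dots,k-1$ gives
\[
\mathbf{R}_{k-1}^\triangle\circ R_X(\lambda)
\;=\;
\Bigl(\prod_{j=1}^{l(\lambda)-k+1} h_{R_X(\lambda)_j}\Bigr)
\cdot
\Bigl(\prod_{1\le a<b\le k-1}(1-R_{ab})\Bigr)h_\nu.
\]
The second factor is precisely the raising-operator form of the Jacobi--Trudi expression $s_\nu$ for the Schur function of the composition $\nu\in\mathbb{Z}_+^{k-1}$, as used in the definition of Schur functions earlier in the paper.

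The remaining step is to show $s_\nu=0$. Since $\nu_{k-1}=1$ and at least one $\nu_r=0$, the sequence $\nu$ is not weakly decreasing, so there is an index $r$ with $\nu_r=0$ and $\nu_{r+1}=1$. Swapping rows $r$ and $r+1$ in the Jacobi--Trudi determinant gives $s_\nu=-s_{\nu'}$, where $\nu'_r=\nu_{r+1}-1=0$, $\nu'_{r+1}=\nu_r+1=1$, and $\nu'$ agrees with $\nu$ elsewhere. Thus $\nu'=\nu$, so $s_\nu=-s_\nu=0$, and the whole product above is zero.

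The delicate technical point is the position-$l(\lambda)$ argument in the first step, which is what pins down $\nu_{k-1}=1$; without it the straightening could not be applied and one could indeed have $\mathbf{R}_{k-1}^\triangle(\nu)\ne 0$ (e.g., on $\nu=(1,1,0)$). Everything else is a clean reduction to Jacobi--Trudi followed by the well-known determinantal antisymmetry of Schur functions, which unifies the triangle/staircase conditions of Lemma \ref{alleq} and sidesteps the need to verify $\hat\nu_i\le i-1$ by hand (the condition indeed fails whenever $\nu_1=1$).
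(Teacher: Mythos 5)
Your route is genuinely different from the paper's. The paper proves this by constructing explicit sign-reversing matchings on the operator sets: it locates the topmost $1$-part that loses a box, applies Lemma \ref{alleq} to the triangle of operators below it, and then splits into the cases $w=k-2$ versus $w<k-2$, re-running the $T/S/D$ decomposition from the proof of Lemma \ref{triangle-lemma} in the latter case. Your reduction --- factor out the untouched coordinates, recognize $\prod_{a<b}(1-R_{ab})h_\nu$ as the Jacobi--Trudi expression for the composition $\nu$, and kill it by the row-swap straightening $s_{(\dots,c,c+1,\dots)}=0$ --- is cleaner, uses standard machinery, and genuinely avoids the paper's case split (your observation that Lemma \ref{alleq}'s hypothesis $\hat\nu_1=0$ fails when the bottom of the $1$-column is untouched is exactly why the paper needs its second case). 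The key preliminary step, that $(i,l(\lambda))$ can never lie in the admissible index set so $\nu_{k-1}=1$, is correct and matches the paper's first observation.

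There is, however, one step that fails as written: the claim that $\nu\in\{0,1\}^{k-1}$. The set $X$ can hit a single $1$-part more than once. For example, with $k=4$ and $\lambda=(2,2,1,1,1)$ the admissible pairs include $(1,3)$ and $(2,3)$, so $X=\{(1,3),(2,3)\}$ gives $R_X(\lambda)=(3,3,-1,1,1)$ and $\nu=(-1,1,1)$. You cannot dismiss this by saying the composition ``fails to remain nonnegative'': only the final vector is evaluated (as $h$'s or multinomial coefficients), and the operators in $\mathbf{R}_{k-1}^{\triangle}$ can add a box back to the negative coordinate, producing genuinely nonzero terms such as $h_{(3,3,0,0,1)}$. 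Moreover your straightening step then has nothing to grab onto, since $(-1,1,1)$ contains no entry equal to $0$. (The paper's own proof makes the same unexamined nonnegativity assertion, so this is a shared loose end rather than evidence against your strategy.) The gap is patchable inside your framework: writing $\nu_q=1-m_q$ with $m_q$ the number of hits at reindexed position $q$, the admissibility constraints $i\le l(\lambda)-k+1$ and $\lambda_i\ge 2$ force $m_q\le k-1-q$, hence $\nu_q-q\ge 2-k=\nu_{k-1}-(k-1)$ for all $q$ while $\nu_q-q\le 1-q\le 0$; if the $k-1$ numbers $\nu_q-q$ were all distinct they would have to be exactly $\{2-k,\dots,0\}$, which forces $\sum_q m_q=0$ and contradicts the hypothesis that $X$ removes at least one box from a $1$-part. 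So two rows of the Jacobi--Trudi determinant coincide and the product vanishes in every case. With that addition your argument is a complete and arguably preferable proof.
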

\pf First, observe that for any $(i,j) \in X,$ we have $j < l(\lambda)$, so the box on the top row of $\lambda$ is not removed. Secondly, also by the assumption on $X,$ there must be some box moved from the parts of length $1$. Let the smallest index of the part that a box is removed is $l(\lambda)-w$ where $1\leq w \leq k-2.$ Then it follows that, by Def. \eqref{trunc}, we have $\hat{R_X}[(\lambda_{l(\lambda)-w},\ldots,\lambda_{l(\lambda)})]=R_X[(\lambda_{l(\lambda)-w},\ldots,\lambda_{l(\lambda)})] \in \mathbb{Z}_{+}^{w+1}$ is non-zero. 

Now, if $w=k-2,$ so the removed box is at the bottom of the column of ones, by Lemma \ref{alleq}, $\mathbf{R}_{k-1}^{\triangle} \circ R_X(\lambda)=0.$ However, if $w<k-2,$ we need to consider both $\mathbf{R}_{w}^{\triangle}$ and the operators associated with parts $\lambda_{l(\lambda)-k+2},\ldots,\lambda_{l(\lambda)-w}.$ In this case, we repeat a part of the proof of Lemma \ref{triangle-lemma}. More specifically, in the inductive step of the proof, we take $m=w$ for $t=k-1,$ the set of operators in $T$ corresponds to $\mathbf{R}_{w}^{\triangle}$, see Figure \ref{triangle}. 
By Lemma \ref{alleq}, there exists a matching within $T$ as in \eqref{match}. Then the rest of the argument in the proof for the operators in $S$ and $D$ applies in the same way, and we get the desired result.

\bbox

The same argument also applies to partitions with longer columns of $1.$ For our particular interest, we need the following.
\begin{lemma} \label{long_col2}
Let $\lambda \in \mathcal{P}_k$ such that $l_1(\lambda)=k.$ Then for any $X \subset \{(i,j) \in \mathbb{Z}^2 \, : \, 1 \leq i \leq l(\lambda)-k,i+1 \leq j \leq k-\lambda_i+i\}$ containing an $(i,j)$ for some $j\geq l(\lambda)-k+2,$ we have $\mathbf{R}_{k}^{\triangle} \circ R_X(\lambda)=0.$ 
\end{lemma}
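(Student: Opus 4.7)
The plan is to adapt the proof of Lemma \ref{long_col1} almost verbatim, with index shifts to account for one additional part of length $1$ at the bottom of $\lambda$. The structural setup is identical: an operator $R_X$ has removed at least one box from within the column of $1$'s, and we want $\mathbf{R}_k^\triangle$ (rather than $\mathbf{R}_{k-1}^\triangle$) to annihilate the resulting vector on its bottom $k$ parts.

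First I would verify that no operator in $X$ reaches the top row. Since $1 \le i \le l(\lambda)-k$ and $\lambda_i \ge 2$ for every $(i,j) \in X$ (as $\lambda_{l(\lambda)-k}$ is the first non-$1$ part), we have $j \le k - \lambda_i + i \le l(\lambda)-2$. Combined with the hypothesis that some $(i,j) \in X$ has $j \ge l(\lambda)-k+2$, this shows that $R_X$ removes at least one box from within the column of $1$'s. Let $l(\lambda)-w$ be the smallest index of a part from which a box is removed, with $w$ an integer bounded above by $k-1$. Writing $\mu$ for the truncated tail $(R_X(\lambda)_{l(\lambda)-w}, \ldots, R_X(\lambda)_{l(\lambda)})$, by construction $\hat\mu \in \mathbb{Z}_+^{w+1}$ is non-zero and satisfies $\hat\mu_i \le i-1$ coordinatewise, placing us squarely inside the hypotheses of Lemma \ref{alleq}.

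If $w = k-1$, then $\mathbf{R}_k^\triangle$ acts precisely on these $k = w+1$ parts, and Lemma \ref{alleq} applies directly to give $\mathbf{R}_k^\triangle \circ R_X(\lambda) = 0$. For $w < k-1$ I would repeat the inductive matching argument from the proof of Lemma \ref{triangle-lemma} with parameters $m = w$ and $t = k$: Lemma \ref{alleq} produces a sign-reversing matching on the subsets of the top subtriangle $T$ of operators within $\mathbf{R}_k^\triangle$; this matching is then propagated through the strip $S$ and bottom subtriangle $D$ of operators exactly as in the proof of Lemma \ref{triangle-lemma} (and of Lemma \ref{long_col1}).

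Because the argument is a direct translation of the proof of Lemma \ref{long_col1} under the substitution $k-1 \mapsto k$, no genuine obstacle arises. The only points requiring verification are that the indices appearing in $T_\lambda$ still slot correctly into the regions $T$, $S$, $D$ of the proof of Lemma \ref{triangle-lemma}, and that the bound $\hat\mu_i \le i-1$ needed to invoke Lemma \ref{alleq} is maintained after the shift in column height — both of which amount to routine bookkeeping.
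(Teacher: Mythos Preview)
Your proposal is correct and follows exactly the paper's own approach: the paper's proof of Lemma \ref{long_col2} is a two-line remark that the top part of length $1$ is untouched by $X$ while some other part of length $1$ is, so the proof of Lemma \ref{long_col1} carries over verbatim. Your write-up simply spells out the index shifts ($k-1\mapsto k$, $\mathbf{R}_{k-1}^\triangle\mapsto\mathbf{R}_k^\triangle$) and the case split on $w$ in more detail than the paper does, but the underlying argument is identical.
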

\pf Observe that the top part is not moved by $X$ as before and some other part of length $1$ is moved. Therefore, the proof of Lemma \ref{long_col1} yields the same result.

\bbox

\bibliographystyle{alpha}
\bibliography{kcore}

\end{document}